% -*- TeX-PDF-mode: t -*- 
\documentclass[leqno,12pt]{amsart}
\usepackage{jdr-style,jdr-macros}
\usepackage[all,tips]{xy}
\usepackage{longtable}
\usepackage{url}
\usepackage{verbatim}
\usepackage{tikz}
\usepackage{mathtools}

\CompileMatrices

%%%%%%%%%%%%%%%%%%%%%%%%%%%%%%
% New Math Commands %
%%%%%%%%%%%%%%%%%%%%%%%%%%%%%%

\newcommand{\parref}[1]{\textup{(\ref{#1})}}

\title{Non-Archimedean and tropical theta functions}

\author{Tyler Foster}
\email{foster@ihes.fr}
\address{L'Institut des Hautes \'Etudes Scientifiques, Le Bois-Marie, 35 route de Chartres, 91440 Bures-sur-Yvette, France}

\author{Joseph Rabinoff} 
\email{rabinoff@math.gatech.edu}
\address{School of Mathematics, Georgia Institute of Technology, Atlanta GA 30332-0160, USA}

\author{Farbod Shokrieh}
\email{farbod@math.cornell.edu}
\address{Department of Mathematics, Cornell University, Ithaca, New York 14853-4201, USA}

\author{Alejandro Soto}
\email{alejandro.soto@uni-tuebingen.de}
\address{Eberhard Karls Universit\" at Tübingen, Fachbereich Mathematik, Auf der Morgenstelle 10, D-72076 T\" ubingen, Germany. }

\begin{document}

\maketitle

\begin{abstract}
  We define a tropicalization procedure for theta functions on abelian varieties over a non-Archimedean field.  We show that the tropicalization of a non-Archimedean theta function is a tropical theta function, and that the tropicalization of a non-Archime\-dean Riemann theta function is a tropical Riemann theta function, up to scaling and an additive constant.  We apply these results to the construction of rational functions with prescribed behavior on the skeleton of a principally polarized abelian variety.  We work with the Raynaud--Bosch--L\"utkebohmert theory of non-Archimedean theta functions for abelian varieties with semi-abelian reduction.
\end{abstract}

\section{Introduction}
\label{sec:intro}
Let $K$ be a nontrivially valued, complete, algebraically closed, non-Archimedean field.  Let $A$ be an abelian variety over $K$.
Suppose for the moment that $A$ is principally polarized with totally degenerate
reduction.  In this case, the non-Archimedean uniformization theory of $A$
essentially amounts to a finitely generated, free abelian group $M$, and a
symmetric pairing $t(\scdot,\scdot)\colon M\times M\to K^\times$, such
that $-\log|t|$ is positive-definite.  This pairing defines
a homomorphism from $M$ to the torus $\bT = \Spec K[M]$ by the rule
$\iota\colon u\mapsto x_u$, where $x_u(\chi^{u'}) = t(u,u')$.  (Here
$\chi^{u'}\colon\bT\to\bG_m$ is the character corresponding to $u'\in M$).  Then
the Berkovich analytification $A^\an$ is canonically isomorphic to the quotient $\bT^\an/\iota(M)$.
In this setting, a (non-Archimedean) Riemann theta function for $A$ is the analytic function $f\colon\bT^\an\to\bA^{1,\an}$ given by the convergent power series
\begin{equation*}\tag{*}
  f = \sum_{u\in M} \sqrt{t(u,u)}\,\chi^u
\end{equation*}
with respect to some choice of $\sqrt t$.

Berkovich~\cite{berkovic90:analytic_geometry} showed that the analytification
$A^\an$ contains a canonical subset $\Sigma$, called a \emph{skeleton}, onto
which it deformation retracts.  This skeleton is a
\emph{tropical abelian variety} (Definition~\ref{def:trop.av},
Proposition~\ref{prop:skel.polarization}): it is a real torus with an integral
structure that admits a ``polarization''.  A principal polarization of $A$
induces a principal polarization of $\Sigma$, in which case $\Sigma$ amounts to
the data of a finitely generated, free abelian group $M$ equipped with a
positive-definite, symmetric bilinear pairing
$[\scdot,\scdot]\colon M\times M\to\R$.  This pairing defines an injective
homomorphism $\iota\colon M\to N_\R = \Hom(M,\R)$ by
$\iota(u) = (u'\mapsto [u,u'])$; the real torus in question is
$\Sigma = N_\R/\iota(M)$.  The tropical Riemann theta function for $\Sigma$ is the piecewise linear function $\phi\colon N_\R\to\R$ defined by the formula
\begin{equation*}\tag{**}
  \phi(v) = \min_{u\in M}\big\{ \tfrac 12[u,u] + \angles{u,v} \big\},
\end{equation*}
where $\angles{\scdot,\scdot}\colon M\times N_\R\to\R$ is the evaluation pairing.

In this paper, we make the obvious analogy between~(*) and~(**) into a precise relationship.  We do not assume $A$ is principally polarized, and more importantly, we allow $A$ to have arbitrary mixed reduction type.  It is still true that $A^\an$ admits a canonical skeleton $\Sigma = N_\R/M'$, which is a tropical abelian variety, but the theory of non-Archimedean theta functions in this case is very much more technical.
We define a tropicalization procedure $f\mapsto f_{\trop}$, which takes a theta function $f$ on $A^\an$ and produces a piecewise-linear function $f_{\trop}\colon N_\R\to\R$.  We prove the following results:

\begin{oneoffthm*}{Theorem~A}
  The tropicalization of a non-Archimedean theta function is a tropical theta function.
\end{oneoffthm*}

See Definition~\ref{def:trop.theta} for the definition of a general tropical theta function, and see Theorem~\ref{thm:trop.theta} for a precise statement of Theorem~A.

\begin{oneoffthm*}{Theorem~B}
  If $f$ is the Riemann theta function associated to a principal polarization of $A$, then $f_{\trop}$ is the tropical Riemann theta function associated to the induced principal polarization of $\Sigma$, up to translation and an additive constant.
\end{oneoffthm*}

See Theorem~\ref{thm:trop.riemann.theta} for a precise statement.

The main interest in Theorems~A and~B is to construct rational functions on $A$ with a prescribed behavior on $\Sigma$.  Given a nonzero rational function $f$ on $A$, one can simply restrict $-\log|f|$ to $\Sigma$, to obtain a piecewise linear function $f_{\trop}\colon\Sigma\to\R$.  Let $(f_1,\ldots,f_n)$ be a tuple of rational functions, and let $f\colon A\dashrightarrow\bG_m^n$ be the rational map $(f_1,\ldots,f_n)$.  Composing with the tropicalization map $\trop\colon\bG_m^{n,\an}\to\R^n$, defined on points by $(x_1,\ldots,x_n)\mapsto-(\log|x_1|,\ldots,\log|x_n|)$, gives a partially-defined function $A^\an\to\R^n$.  The restriction of this function to $\Sigma$ is the function $f_{\trop}\colon\Sigma\to\R^n$ given by $f_{\trop}(x) = (f_{1,\trop}(x),\ldots,f_{n,\trop}(x))$.  Careful construction of $(f_1,\ldots,f_n)$ yields a map $f_{\trop}$ with nice properties (for example, unimodularity); this is the subject of future work of the authors.

In order to allow such constructions, we prove the following result.

\begin{oneoffthm*}{Theorem~C}
  Let $\phi_1,\phi_2$ be tropical theta functions for $\Sigma$ with the same automorphy factor.  Suppose that $\phi_1$ and $\phi_2$ are tropicalizations of non-Archimedean theta functions.  Then $\phi_1-\phi_2\colon\Sigma\to\R$ is the tropicalization of a nonzero rational function on $A$.
\end{oneoffthm*}

See Theorem~\ref{thm:same.automorphy} for a precise statement.  In~\S\ref{sec:constr-ratl-func} we give concrete examples of constructions of rational functions on $A$ using Theorem~C.

\subsection{Notation}\label{par:general.notation}
The following notations are used throughout the paper.  We let $K$ denote an
algebraically closed field which is complete with respect to a nontrivial,
non-Archimedean absolute value $|\scdot|$.  Let $\val = -\log |\scdot|$ denote a
corresponding valuation, $R$ the ring of integers of $K$, and $k$ its
(algebraically closed) residue field.

We will generally use $M$ for a finitely-generated, free abelian group; we
denote its dual by $N = \Hom(M,\Z)$, and we set
$N_\R = \Hom(M,\R) = N\tensor_\Z\R$.  The evaluation pairing is denoted
$\angles{\scdot,\scdot}\colon M\times N_\R\to\R$.  The monoid ring on
$M\cong\Z^n$ is written $K[M]\cong K[T_1^{\pm 1},\ldots,T_n^{\pm 1}]$, and the
character on $\Spec(K[M])$ corresponding to $u\in M$ is written
$\chi^u\in K[M]$.

If $X$ is a finitely-type $K$-scheme, we denote by $X^\an$ its analytification in the sense of Berkovich~\cite{berkovic90:analytic_geometry}.  For
$x\in X^\an$ we let $\sH(x)$ denote the completed residue field at $x$.  This is
a complete valued field extension of $K$ with valuation ring $\sH(x)^\circ$.

Let $L$ be a line bundle on a $K$-scheme $X$ and let $x\in X(K)$ be a $K$-point.
The $x$-fiber of $L$ is denoted $L_x$, and the constant line bundle on $L_x$ is
$L(x)\coloneq L_x\times X$.

The special and generic fibers of an $R$-scheme $\fX$ are denoted $\fX_s$ and
$\fX_\eta$, respectively.

Let $A$ be an abelian variety.  The dual of $A$ is denoted $A'$, and if
$\phi\colon A\to B$ is a homomorphism of abelian varieties then its dual is
denoted $\phi'\colon B'\to A'$.  For $x\in A(K)$ we let $T_x\colon A\to A$ be
translation by $x$.
A line bundle $L$ on $A$ determines a symmetric
homomorphism $\phi_L\colon A\to A'$ defined on points by
$x\mapsto T_x^*L\tensor L\inv$.

\subsection*{Acknowledgments}
This project was completed as a part of the Mathematics Research Communities
workshop held by the American Mathematical Society in the summer of 2013.  The
authors would like to thank the AMS for their support, and Matthew Baker and Sam
Payne for organizing the workshop and for useful discussions.  The third author would like to thank Alberto Bellardini for many helpful conversations.
The authors gratefully thank the anonymous referee for carefully reading the paper and for providing some very helpful comments.

Foster was supported by NSF RTG grant DMS-0943832 and by Le Laboratoire d'Excel\-lence CARMIN. Rabinoff was supported by NSF DMS-1601842. Soto would like to thank to the KU Leuven  and to the Goethe Universit\"at Frankfurt am Main for the great working conditions.

%%%%%%%%%%%%%%%%%%%%%%%%%%%%%%%%%%%

\vskip 1cm

\section{Tropical abelian varieties and tropical theta functions}
\label{sec:tropical.avs}

In this section we fix our notions regarding real tori with integral structure,
tropical abelian varieties, and tropical theta functions.  We define the
tropical Riemann theta function associated to a principally polarized tropical abelian variety.

\subsection{Real tori with integral structure}
We begin by defining real tori with integral structure, a weaker notion than
a tropical abelian variety which is sufficient to define piecewise linear functions.

\begin{defn}\label{def:integral.torus}
  Let $N$ be a finitely generated, free abelian group, let
  $N_\R = N\tensor_\Z\R$, and let $\Lambda\subset N_\R$ be a (full rank) lattice.
  The quotient $\Sigma = N_\R/\Lambda$ is called a
  \emph{real torus with integral structure}.
\end{defn}

The ``integral structure'' in $\Sigma$ is the choice of lattice $N\subset N_\R$,
which need not coincide with the quotient lattice $\Lambda$.

\begin{defn}\label{defn:unimodularity}
  Let $N,N'$ be finitely generated, free abelian groups and let
  $N_\R = N\tensor_\Z\R$ and $N'_\R = N'\tensor_\Z\R$.  Let
  $\Lambda\subset N_\R$ be a lattice and let $\Sigma = N_\R/\Lambda$, with
  quotient homomorphism $\pi\colon N_\R\to\Sigma$.
  \begin{enumerate}
  \item A homomorphism $\phi\colon N_\R\to N'_\R$ is \emph{integral affine} provided
    that $\phi = \psi_\R + v$, where $\psi\colon N\to N'$ is a homomorphism,
    $\psi_\R$ is the extension of scalars of $\psi$, and $v\in N_\R'$.
  \item A continuous function $\phi\colon N_\R\to N_\R'$ is
    \emph{piecewise integral affine} provided that $N_\R$ can be covered by
    full-dimensional polyhedra $\Delta$ such that $\phi|_\Delta$ is integral
    affine.
  \item A continuous function $\bar\phi\colon\Sigma\to N_\R'$ is
    \emph{piecewise integral affine} provided that the composition
    $\phi = \bar\phi\circ\pi$ is piecewise integral affine.
  \end{enumerate}
  We say that a function $\phi$ from $N_\R$ or $\Sigma$ to $\R^r$ has one of the
  above properties if it has that property with respect to the integral structure
  $N' = \Z^r \subset \R^r$.
\end{defn}

Some of the functions $\phi\colon\Sigma\to N_\R'$ arising in the sequel will
be symmetric with respect to negation, so we make the following definitions.

\begin{defn}\label{def:tropically.kummer}
  With the notation in Definition~\ref{defn:unimodularity}, let
  $\phi\colon\Sigma\to N_\R'$ be a piecewise integral affine function.
  We say that $\phi$ is \emph{Kummer} provided that $\phi(x) = \phi(-x)$
  for all $x\in\Sigma$.
\end{defn}

In other words, a Kummer map $\phi\colon \Sigma\to N_\R'$ factors through the
quotient $\Sigma\surject\Sigma/(-1)$ by the negation action.

\subsection{Tropical abelian varieties}
Now we discuss tropical abelian varieties.  These are real tori with integral
structure which admit a polarization, in the sense defined below.  The skeleton of an
abelian variety is naturally a tropical abelian variety: see
Proposition~\ref{prop:skel.polarization} below.  Tropical abelian varieties and
tropical theta functions have been implicit in the literature for years; see for
instance Mumford~\cite[Proposition~6.7]{mumford72:analytic_degen_avs} and
Faltings--Chai~\cite[Definition~1.5,
p.197]{faltings_chai90:degenerat_abelian_varieties}.  The name ``tropical
abelian variety'' seems to have been used first
in~\cite{mikhalkin_zharkov08:tropical_curves}.

\begin{defn}\label{def:trop.av}
  We fix the following data:
  \begin{enumerate}
  \item[(a)] Finitely generated free abelian groups $M,M'$ of the same rank.
  \item[(b)] A nondegenerate pairing $[\scdot,\scdot]\colon M'\times M\to\R$.
  \end{enumerate}
  The pairing defines $M'$ (resp.\ $M$) as a lattice in $N_\R = \Hom(M,\R)$
  (resp.\ $N_\R' = \Hom(M',\R)$).  Suppose that there exists
  $\lambda\colon M'\to M$ such that
  $[\scdot,\lambda(\scdot)]\colon M'\times M'\to\R$ is symmetric and
  positive-definite.  Then the map $\phi\colon N_\R\to N_\R'$ dual to
  $\lambda$ takes $M'$ into $M$ via $\lambda$.  Under these conditions:
  \begin{enumerate}
  \item We say that $\Sigma = N_\R/M'$ is a \emph{tropical abelian variety}.
  \item The \emph{dual} of $\Sigma$ is the tropical abelian variety
    $\Sigma' = N_\R'/M$.
  \item The induced homomorphism $\bar\phi\colon\Sigma\to\Sigma'$ is called a
    \emph{polarization}.
  \item We say that $\phi$ is a \emph{principal polarization}
  if $\lambda$ is an isomorphism.
  \end{enumerate}
\end{defn}

The existence of $\lambda$ here plays the role of Riemann's period relations,
which guarantee that a complex torus is in fact an algebraic variety.  We leave
it as an exercise to show that $\Sigma'$ is in fact a tropical abelian variety.
Since $N = \Hom(M,\Z)$ is a lattice in $N_\R$, a tropical abelian variety is a
real torus with integral structure.

\subsection{Tropical theta functions}
Finally we are able to define tropical theta functions. Recall that classically, a theta function is a holomorphic function on a complex vector space $V$ which is quasi-periodic with respect to a full dimensional lattice in $V$. The same is true in the non-archimedean setting. In the tropical side, a tropical theta function will be defined following the classical counterparts, i.e. as a piecewise linear function on a real vector space which is quasi-periodic with respect to a lattice. This quasi-periodicity condition should be thought as the tropical side of the non-archimedean one. This will made precise in \S 4.

\begin{defn}\label{def:trop.theta}
  Let $\Sigma = N_\R/M'$ be a tropical abelian variety, as in
  Definition~\ref{def:trop.av}, and let $\lambda\colon M'\to M$ be a
  homomorphism definining a polarization.  Let $c\colon M'\to\R$ be a function
  satisfying $c(u_1'+u_2')-c(u_1')-c(u_2') = [u_1',\lambda(u_2')]$ for all
  $u_1',u_2'\in M'$.  A \emph{tropical theta function} with respect to
  $(\lambda,c)$ is a piecewise integral affine function $\phi\colon N_\R\to\R$
  satisfying the transformation law
  \[ \phi(v) = \phi(v + u') + c(u') + \angles{\lambda(u'),v} \]
  for all $u'\in M'$ and $v\in N_\R$.
\end{defn}

Let $c$ be a function as in Definition~\ref{def:trop.theta}.  Then
$u'\mapsto c(u')-\frac 12[u',\lambda(u')]$ is a homomorphism $M'\to\R$.  In other words,
$c(u')$ differs from $\frac 12[u',\lambda(u')]$ by a homomorphism.

\begin{defn}\label{def:trop.riemann.theta}
  With the notation in Definition~\ref{def:trop.theta}, suppose now that
  $\lambda$ is an isomorphism.  The \emph{tropical Riemann theta function}
  associated to $\lambda$ is
  \[ \phi(v) \coloneq \min_{u'\in M'}\big\{ \tfrac 12[u',\lambda(u')] +
 \angles{\lambda(u'),v} \big\}. \]
\end{defn}

The tropical Riemann theta function $\phi$ is in fact a tropical theta function
with respect to $(\lambda,c)$ for $c(u') = \frac 12[u',\lambda(u')]$: however,
it is not obvious that $\phi$ is piecewise integral affine or that it satisfies
the stipulated transformation law.  This will  follow from Theorem~\ref{thm:trop.theta}, but this is not a reasonable proof.
See~\cite{mikhalkin_zharkov08:tropical_curves} for a full discussion of the
tropical Riemann theta function.

\section{Non-Archimedean uniformization and theta functions}
\label{sec:uniformization}

Recall that $K$ is a complete and algebraically closed non-Archimedean valued
field with valuation ring $R$.
Let $A$ be an abelian variety over $K$.  A theta function is almost the same as
a global section of a line bundle $L$ on $A$.  The reason for the name is that
theta functions are constructed analytically on the universal cover of
$A^\an$, as in the classical situation over $\C$, where such functions are generally called $\theta$.  This theory of
non-Archimedean uniformizations and theta functions was worked out by Bosch and
L\"utkebohmert.  It is a beautiful theory, but when $A$ has semi-abelian
reduction it is quite technical.  In this section we recall the results
in~\cite{bosch_lutkeboh91:degenerat_abelian_varieties} that we will use, translated from
Bosch--L\"utkebohmert's language of rigid and formal geometry into the language
of Berkovich analytic spaces.  We also prove a fact about Fourier coefficients
of non-Archimedean theta functions in Proposition~\ref{prop:val.riemann.theta}, which will be important in~\S\ref{sec:tropicalization.theta}.

We discuss the totally degenerate case, which is much simpler, as a running
example.  The reader may want to understand this case first.  See also
Faltings--Chai~\cite{faltings_chai90:degenerat_abelian_varieties}, as well as
Fresnel--van der Put~\cite{fresnel_vanderput04:rigid_analytic_geometry} for a
detailed discussion in the totally degenerate case.

If $L\to A$ is a line bundle then $L^\an\to A^\an$ is also a line
bundle, and it follows from analytic GAGA that $H^0(A,L)=H^0(A^\an,L^\an)$.
Moreover, any analytic line bundle on a proper variety is automatically
algebraic.  Hence we will sometimes neglect to distinguish between algebraic and
analytic line bundles on abelian varieties.

\subsection{Metrized line bundles}\label{par:model.metrics}
Let $\pi\colon L\to X$ be a line bundle on an analytic space.  A \emph{metric}
on $L$ is a function $\|\scdot\|\colon L\to\R_{\geq 0}$ which is compatible with
the $\bG_m^\an$-action and is non-constant on fibers: that is, given $x\in X$
and an isomorphism $\pi\inv(x)\cong\bA^{1,\an}_{\sH(x)} = \Spec(\sH(x)[T])^\an$,
for $y\in\pi\inv(x)$ we have $\|y\| = c|T(y)|$ for some $c\in|\sH(x)^\times|$.
We require the metric to be continuous in the sense that for
$U\subset X$ open and a section $s\colon U\to L|_U$, the function
$x\mapsto\|s(x)\|\colon U\to\R_{\geq 0}$ is continuous.

Given a metrized line bundle $(L,\|\scdot\|)$ on $X$ and a morphism
$\phi\colon Y\to X$, there is an obvious metric on $\phi^*L$.  Given two
metrized line bundles $(L_1,\|\scdot\|_1)$ and $(L_2,\|\scdot\|_2)$, there is a
metric $\|\scdot\|$ on $L\tensor L'$ characterized by
$\|s_1\tensor s_2(x)\| = \|s_1(x)\|_1\|s_2(x)\|_2$.

An integral model induces a metric in the following way.
Let $\fL$ be a line bundle on a flat, proper $R$-scheme $\fX$.  Let $L$ be the
generic fiber of $\fL$, i.e., the restriction of $\fL$  to
$X\coloneq\fX \times_R \Spec(K)$.  In this situation we say that $\fL$ is an
\emph{integral model} of $L$ on $\fX$.  The \emph{model metric}
$\|\scdot\|_\fL\colon L^\an\to\R_{\geq0}$ on $L^\an$ associated to the model
$\fL$ is defined as follows.  Let $y\in L^\an$ with image $x\in X^\an$.  By the valuative criterion of
properness, $x$ extends to a unique $\sH(x)^\circ$-point
$x\colon\Spec\sH(x)^\circ\to\fX$.  Choosing any trivialization of $\fL$ in a
neighborhood of the reduction of $x$ gives an isomorphism
$T\colon x^*\fL\isom\bA^1_{\sH(x)^\circ}$.  We set $\|y\|_\fL = |T(y)|$.  This is
well-defined because any other trivialization $T'$ of $x^*\fL$ differs by a unit in
$\sH(y)^\circ$.  See also~\cite[\S3]{gubler10:canonical_measures}.

Suppose now that $B$ is an abelian variety over $K$ with good reduction.  Up to
isomorphism, there is a unique abelian $R$-scheme $\fB$ whose generic fiber is
identified with $B$.  Let $L$ be a rigidified line bundle on $B$ (a line bundle
with a trivialization of its identity fiber).  It follows from~\cite[Lemma~6.1]{bosch_lutkeboh91:degenerat_abelian_varieties} and from the formal GAGA principle~\cite[{\S}I.10]{fujiwara_kato14:fgrI}  that $L$ has a \emph{canonical}
integral model $\fL$ on $\fB$, so there is a canonical model metric on $L^\an$,
which we denote by $\|\scdot\|_L$.

\subsection{Raynaud--Bosch--L\"utkebohmert uniformization}
\label{bosch_luetkebohmert_uniformization}
Let $A$ be an abelian variety over $K$.  Let $p\colon E^\an\to A^\an$ be the
universal cover (in the sense of topology), and choose a base point
$0\in E^\an$ over the identity.  Then $E^\an$ has the unique structure of an
analytic group with identity $0$, and $p$ is a homomorphism.  In fact $E^\an$ is
the analytification of a group scheme $E$, although the homomorphism $p$ is not
algebraic.  The uniformization theory of Raynaud and Bosch--L\"utkebohmert says
that there are two exact sequences
\begin{align}
  \label{raynaud1}
  &0\To \bT\To E \overset{q}{\To} B\To 0 \\
  \label{raynaud2}
  &0\To M' \To E^\an \overset{p}{\To} A^\an \To 0,
\end{align}
where $\bT = \Spec(K[M])$ is a split $K$-torus, $B$ is an abelian variety with
good reduction, and $M'$ is a lattice in $E^\an$ (see below).  Moreover, all of
these data are uniquely determined by $A$, up to isomorphism.  The
sequence~\eqref{raynaud1} is called a \emph{Raynaud extension}.  Both sequences
are often written in a so-called \emph{Raynaud cross}
\[\xymatrix @=.2in{
  &{M'} \ar[d] & \\
  {\bT^\an} \ar[r] & {E^\an}\ar[d]^(.4)p \ar[r]^(.45)q & {B^\an} \\
  & {A^\an} &
}\]

Given $u\in M$, we construct the pushout diagram
\begin{equation}\label{pushout}
\xymatrix @R=.2in{
  0\ar[r] & \bT \ar[r] \ar[d]^{\chi^u} & E \ar[d]^{e_u} \ar[r]^q &B \ar@{=}[d] \ar[r] &0 \\
  0\ar[r] & \bG_m\ar[r]& E_{u} \ar[r]&B \ar[r]&0
}
\end{equation}
A rigidified translation-invariant line bundle on an abelian variety $B$ amounts
to an extension of $B$ by $\bG_m$: given such an extension $L'$, the
$\bG_m$-action makes $L'$ into a $\bG_m$-torsor over $B$, which is then
identified with the complement of the zero section of a line bundle $L$.  This
line bundle is translation-invariant: given $x\in B$, choose a lift $y\in L'$;
then multiplication by $y$ in $L'$ yields an isomorphism $L\isom x^*L$.
Thus $E_u$ is the complement of the zero locus of such a rigidified
translation-invariant line bundle on $B$; we also denote this line bundle (with
zero section included) by $E_u$ by abuse of notation.  Note that we may regard
$e_u$ as a trivialization of the pullback $q^*E_u$.

Now we explain the lattice condition.  Define the \textit{tropicalization map}
$\trop\colon \bT^\an\to N_\R$ by the rule
\[ \angles{u,\,\trop(x)} \coloneq -\log |\chi^u(x)|, \quad u\in M. \]
This extends in a canonical way to a homomorphism $\trop\colon E^\an \To N_\R$
by setting
\begin{equation}\label{eq:trop.E}
  \angles{u,\trop(x)} = -\log\|e_u(x)\|_{E_u},
\end{equation}
where $\|\scdot\|_{E_u}$ is the model metric defined
in~\parref{par:model.metrics} using the fact that $B$ has good reduction.
To say that $M'$ is a \emph{lattice} means that $\trop$ maps $M'$
isomorphically onto an ordinary lattice (of full rank) in the Euclidean space
$N_\R$.

\begin{rem}\label{rem:formal.raynaud}
  The identity fiber of $\trop\colon E^\an\to N_\R$ is a subgroup $A_0$ of
  $E^\an$, and we have a short exact sequence
  \begin{equation}\label{eq:raynaud.formal}
    0 \To \bT_0 \To A_0 \overset{q_0}\To B^\an \To 0
  \end{equation}
  with $\bT_0 = \trop\inv(0)$, the affinoid torus.  We call the
  sequence~\eqref{eq:raynaud.formal} a \emph{formal Raynaud extension} because
  it arises as the generic fiber of a short exact sequence of formal group
  schemes over $R$.  This sequence splits locally in the formal analytic
  topology on $B^\an$: that is, there exists a cover of $B^\an$ by open sets $V$
  which are the generic fiber of a formal affine, such that
  $q_0\inv(V)\cong\bT_0\times V$.  Extending the splitting to
  $q\inv(V)\cong\bT\times V$, the map $\trop\colon q\inv(V)\to N_\R$ is the
  composition of the first projection $\bT\times V\to\bT$ with
  $\trop\colon\bT\to N_\R$.  See~\cite[\S1]{bosch_lutkeboh91:degenerat_abelian_varieties}
  and~\cite[(4.2)]{gubler10:canonical_measures} for details.
\end{rem}

The two extremal cases for the uniformization theory of $A$ are when $A$ has
good reduction, in which case $\bT$ and $M'$ are trivial and $A=B$, and
when $A$ has totally degenerate (i.e.,\ toric) reduction, in which case $B = 0$
and $E = \bT$. It is precisely this later situation which is analogous to the
uniformization of abelian varieties defined over the complex numbers, as we have
$A^\an\cong\bT^\an/M'$.

\subsection{Duality theory}\label{par:unif.dual}
Let $A$ be an abelian variety with dual $A'$.  Much
of~\cite{bosch_lutkeboh91:degenerat_abelian_varieties} is concerned with relating the
uniformizations of $A$ and $A'$.  The end result is that we have the following
Raynaud crosses:
\[\xymatrix @=.2in{
  & {M'} \ar[d] \ar[dr]^(.4){\Phi} & & &
  & M \ar[d] \ar[dr]^(.4){\Phi'} &\\
  {\bT^\an} \ar[r] & {E^\an} \ar[d]_(.4){p} \ar[r]_(.4){q} & {B^\an} & &
  {\bT'^\an} \ar[r] & {E'^\an} \ar[d]_(.4){p'} \ar[r]_(.4){q'} & {B'^\an} \\
  &{A^\an} & & &
  &{A'^\an} & 
}\]
Here $\bT = \Spec(K[M])$ and $\bT' = \Spec(K[M'])$, and $B$ is the dual of $B'$.
The map $\Phi'$ has the property that $\Phi'(u) = E_u$ (regarded as a $K$-point
of $B'$), and likewise for $\Phi$.  Hence for $(u',u)\in M'\times M$ we have
$e_u(u'),e_{u'}(u)\in P_{\Phi(u'),\Phi'(u)}$, the fiber over
$(\Phi(u'),\Phi'(u))$ of the Poincar\'e bundle $P\to B\times B'$.  It turns out
that $e_u(u') = e_{u'}(u)$, and the map
\begin{equation}\label{eq:trivialization.biext}
  (u',u)\mapsto t(u',u)\coloneq e_u(u') = e_{u'}(u)
  \qquad t\colon M'\times M\To P
\end{equation}
is a trivialization of $(\Phi\times\Phi')^*P$, regarded as a biextension of
$M'\times M$.  The valuation
\begin{equation}\label{eq:pairing.1}
  [\scdot, \scdot] \coloneq -\log\|t(\scdot,\scdot)\|_P\colon M'\times M\To \R
\end{equation}
is bilinear and nondegenerate
by~\cite[Proposition~3.4]{bosch_lutkeboh91:degenerat_abelian_varieties}.

We use the notation introduced above for the rest of this section.

\begin{rem}
  In~\cite{bosch_lutkeboh91:degenerat_abelian_varieties} the roles of $M$ and $M'$ are reversed.
  We chose to follow the opposite convention since in their situation, the
  tropicalization map on $E^\an$ would take values in $N_\R'$, which is
  nonstandard from the tropical point of view.
\end{rem}

\begin{eg}\label{eg:toric.1}
  Suppose that $A$ has toric reduction, so $B = 0$ and $E = \bT$.  The
  trivialization $t$ amounts to a bilinear pairing $M'\times M\to K^\times$ such
  that $-\log|t|$ is nondegenerate.  This defines an embedding
  $M'\inject \bT'=\Spec(K[M'])$ by the rule $\chi^{u'}(u) = t(u',u)$, and the
  quotient $\bT'^\an/M'$ is canonically isomorphic to $A'^\an$.  
  See~\cite[Chapter~6]{fresnel_vanderput04:rigid_analytic_geometry}.
\end{eg}

\subsection{Line bundles}
By descent theory, one can study line bundles on $A$ (equivalently, on $A^\an$)
by studying line bundles on $E^\an$ with an $M'$-linearization.  It turns
out that it is enough to consider only line bundles on $E^\an$ which are pulled
back from $B$.

\begin{eg}\label{eg:linearize.Eu}
  Recall from~\eqref{pushout} that we can regard $E_u$ as an extension of $B$ by
  $\bG_m$.  The composite homomorphism 
  \begin{equation}\label{eq:epsilon_u}
    \epsilon_u\colon~ M'\To E(K) \overset{e_u}\To E_u(K)
  \end{equation}
  defines an $M'$-linearization of $E_u$.  This pulls back to an
  $M'$-linearization $\epsilon_u$ of $q^*E_u$.
  By~\cite[Corollary~4.10]{bosch_lutkeboh91:degenerat_abelian_varieties}, a cubical line bundle
  on $E^\an$ with $M'$-linearization descends to the trivial rigidified line
  bundle on $A$ if and only if it is isomorphic to $(q^*E_u,\epsilon_u)$.
\end{eg}

\begin{notn}
  For a line bundle $L$ on a group scheme $G$ we set
  \[ \sD_2L = m^*L\tensor p_1^*L\inv\tensor p_2^*L\inv, \]
  where $p_1,p_2\colon G\times G\to G$ are the two projections and
  $m\colon G\times G\to G$ is multiplication.
\end{notn}

Let $G = B$ be an abelian variety and let $L$ be a line bundle on $B$.
Recall from~\parref{par:general.notation} that $L$ gives rise to a symmetric
homomorphism $\phi_L\colon B\to B'$ defined on points by
$x\mapsto T_x^*L\tensor L\inv$.  
This is equivalent to the ``universal'' identity $(\Id\times\phi_L)^*P = \sD_2L$. 

The following theorem is the non-Archimedean analogue of Riemann's period relations.

\begin{thm}[{\cite[Theorems~6.7 and~6.13]{bosch_lutkeboh91:degenerat_abelian_varieties}}]
  \label{thm:descend.lbs}
  There is a canonical bijective correspondence between the following data:
  \begin{enumerate}
  \item Rigidified line bundles on $A$.
  \item Equivalence classes of triples $(L,\lambda,c)$, where $L$ is a
    rigidified line bundle on $B$, $\lambda\colon M'\to M$ is a group
    homomorphism, and $c\colon M'\to\Phi^*L$ is a trivialization (as a
    rigidified line bundle on $M'$), satisfying
    \[ \Phi'\circ\lambda = \phi_L\circ\Phi \sptxt{and}
    c(u_1'+u_2')\tensor c(u_1')\inv\tensor c(u_2')\inv = t(u_1',\lambda(u_2'))
    \]
    for all $u_1',u_2'\in M'$, where we use the canonical identification
    $\sD_2L = (\Id\times\phi_L)^* P$.  Two triples $(L_1,\lambda_1,c_1)$ and
    $(L_2,\lambda_2,c_2)$ are equivalent provided that $\lambda_1=\lambda_2$ and
    there exists $u\in M$ such that $L_1\tensor L_2\inv\cong E_u$ and
    $c_1\tensor c_2\inv \cong \epsilon_u$ , where $\epsilon_u$ is defined
    in~\eqref{eq:epsilon_u}.
  \end{enumerate}
  The above correspondence is compatible with tensor product in the obvious way.
  Moreover, the line bundle $L_A$ on $A$ corresponding to $(L,\lambda,c)$ is ample
  if and only if $L$ is ample and the symmetric form
  $(u_1',u_2')\mapsto[u_1',\lambda(u_2')]$ is positive-definite, in
  which case
  \[ \dim H^0(A,\,L_A) = [M:\lambda(M')]\,\dim H^0(B,\,L). \]
\end{thm}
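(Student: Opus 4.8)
The plan is to follow Bosch--L\"utkebohmert, reducing the classification to descent along the uniformization~\eqref{raynaud2} together with the theory of cubical line bundles, and then extracting the ampleness and dimension statements from the Fourier expansion of sections along the Raynaud extension~\eqref{raynaud1}. Concretely: $p\colon E^\an\to A^\an$ is a topological covering with deck group $M'$, so descent identifies rigidified line bundles on $A$ with rigidified line bundles on $E^\an$ equipped with an $M'$-linearization; and a rigidified line bundle on the commutative group $E^\an$ carries a canonical cubical structure compatible with descent, so the problem becomes the classification of $M'$-linearized cubical line bundles on $E^\an$. The key structural input is that every cubical line bundle on $E$ is canonically of the form $q^*L$ for a unique rigidified line bundle $L$ on $B$: its restriction to any fiber of $q$ is trivial because the Picard group of $\bT$ vanishes, and a seesaw argument together with the cube structure upgrades this to a canonical isomorphism with $q^*L$, the rigidification of $L$ being the one at the identity of $E$. (This is where one uses that $E^\an$ is the analytification of the scheme $E$, and it is essentially the content of~\cite[Cor.~4.10, \S4--5]{bosch_lutkeboh91:degenerat_abelian_varieties}.) This produces the datum $L$.

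\emph{Unwinding the linearization.} An $M'$-linearization of $q^*L$ is a family of isomorphisms $\alpha_{u'}\colon q^*L\isom T_{u'}^*q^*L$, $u'\in M'$, satisfying $\alpha_{u_1'+u_2'} = T_{u_2'}^*\alpha_{u_1'}\circ\alpha_{u_2'}$. Since translation by $u'$ on $E$ covers translation by $\Phi(u')$ on $B$, one has $T_{u'}^*q^*L = q^*\bigl(T_{\Phi(u')}^*L\bigr)$, and $T_{\Phi(u')}^*L\tensor L\inv$ is translation-invariant, hence isomorphic to $E_{\lambda(u')}$ for a unique $\lambda(u')\in M$; the cocycle identity forces $\lambda$ to be a homomorphism, and the asserted relation $\Phi'\circ\lambda = \phi_L\circ\Phi$ is exactly the statement that $T_{\Phi(u')}^*L\tensor L\inv$ represents the point $\Phi'(\lambda(u'))$ of $B'$. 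Trivializing $q^*E_{\lambda(u')}$ via $e_{\lambda(u')}$ turns $\alpha_{u'}$ into a cubical automorphism of $q^*L$, which is rigid enough to be recorded by its values along $M'\subset E^\an$; this produces $c\colon M'\to\Phi^*L$, and, using $t(u_1',\lambda(u_2')) = e_{\lambda(u_2')}(u_1')$, the cocycle identity becomes precisely $c(u_1'+u_2')\tensor c(u_1')\inv\tensor c(u_2')\inv = t(u_1',\lambda(u_2'))$. Conversely a triple $(L,\lambda,c)$ satisfying the two relations reassembles into an $M'$-linearized cubical line bundle on $E^\an$, hence a rigidified line bundle $L_A$ on $A$. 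Two triples give isomorphic rigidified bundles on $A$ exactly when their tensor difference descends to the trivial rigidified bundle, which by~\cite[Cor.~4.10]{bosch_lutkeboh91:degenerat_abelian_varieties} (see Example~\ref{eg:linearize.Eu}) occurs iff $\lambda_1=\lambda_2$ and $(L_1\tensor L_2\inv,\;c_1\tensor c_2\inv)\cong(E_u,\epsilon_u)$ for some $u\in M$; this is the stated equivalence relation, and compatibility with tensor products is immediate from the construction.

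\emph{Ampleness and the dimension formula.} For the last assertion I would identify $H^0(A,L_A)$ with the space of $M'$-invariant analytic sections of $q^*L$ on $E^\an$. Fourier expansion along the Raynaud extension (equivalently, the fact that $q$ is a $\bT$-torsor over $B$ with $q^*E_u$ trivial) records such a section by a family of elements of $H^0(B,L\tensor E_u)$, $u\in M$, each space of dimension $\dim H^0(B,L)$ when $L$ is nondegenerate; the $M'$-action shifts the index by $\lambda$ and twists by scalars built from $c$ and $t$. An invariant section is therefore determined by its pieces over a set of coset representatives for $\lambda(M')\subseteq M$, subject to convergence of the resulting theta series; since the twisting scalars have absolute values governed by $\exp\bigl(-[u_1',\lambda(u_2')]\bigr)$, all formal invariants are analytic exactly when the symmetric form $(u_1',u_2')\mapsto[u_1',\lambda(u_2')]$ is positive-definite --- which forces $\lambda$ injective, hence $[M:\lambda(M')]<\infty$ --- and then the invariants form a free $H^0(B,L)$-module of rank $[M:\lambda(M')]$, giving the dimension formula. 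Ampleness of $L_A$ is equivalent to $\phi_{L_A}$ being an isogeny together with $H^0(A,L_A)\neq0$, and unwinding $\phi_{L_A}$ through the uniformization shows this amounts to $L$ ample and the form positive-definite. The main obstacle is exactly this last paragraph: pinning down which formal $M'$-invariant sections of $q^*L$ are genuinely analytic --- that is, the convergence analysis of the non-Archimedean theta series --- is the technical heart of~\cite{bosch_lutkeboh91:degenerat_abelian_varieties}, and it is precisely here that positive-definiteness of $[\scdot,\lambda(\scdot)]$ enters.
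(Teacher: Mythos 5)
This theorem is not proved in the paper at all: it is imported verbatim from Bosch--L\"utkebohmert \cite{bosch_lutkeboh91:degenerat_abelian_varieties} (Theorems~6.7 and~6.13), so there is no in-paper argument to compare against. Your sketch follows the same route as that source --- descent of $M'$-linearized cubical line bundles along $p\colon E^\an\to A^\an$, the fact that cubical bundles on $E$ are pullbacks $q^*L$ from $B$, unwinding the linearization into the pair $(\lambda,c)$ with the stated compatibilities, and the Fourier/theta-series analysis for the ampleness and dimension statements --- and, like the paper, it defers the genuinely technical points (that analytic line bundles here reduce to pullbacks from $B$, and the convergence analysis where positive-definiteness of $[\scdot,\lambda(\scdot)]$ enters) to \cite{bosch_lutkeboh91:degenerat_abelian_varieties}, which is appropriate given that the statement is a quoted result rather than one the paper establishes.
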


The correspondence is constructed by descent with respect to the subgroup
$M'\subset E^\an$.  The $M'$-linearization on $q^*L$ defined by a triple in~(2)
is described as follows, in terms of
sections~\cite[Proposition~4.9]{bosch_lutkeboh91:degenerat_abelian_varieties}: the isomorphism
$q^*L\isom T_{u'}^*q^*L$ corresponding to the action of $u'\in M'$ on $q^*L$ is
given by $f\mapsto c(u')\tensor e_{\lambda(u')}\tensor f$, up to canonical
isomorphism.  (The canonical isomorphism in question is the pullback via $q$ of
$T_{\Phi(u')}^*L\cong L(\Phi(u'))\tensor E_{\lambda(u')}\tensor L$, where
$L(\Phi(u'))$ is the constant line bundle on the fiber of $L$ over $\Phi(u')$.
See~\cite[\S4]{bosch_lutkeboh91:degenerat_abelian_varieties}.)

\begin{eg}\label{eg:toric.2}
  Suppose that $A$ has toric reduction, as in Example~\ref{eg:toric.1}.  Since
  there are no nontrivial line bundles on $\bT^\an$, by descent theory, the
  Picard group $\Pic(A)$ is naturally isomorphic to
  $H^1(M',\sO(\bT^\an)^\times)$, the group cohomology of $M'$ with coefficients
  in $\sO(\bT^\an)^\times$.  Any analytic invertible function on $\bT^\an$ is in
  fact algebraic~\cite[Theorem~6.3.3(1)]{fresnel_vanderput04:rigid_analytic_geometry}, hence is a
  constant times a character; in other words,
  $\sO(\bT^\an)^\times = K^\times\times M$.  Let $Z\colon u'\mapsto Z_{u'}$ be a
  $1$-cocycle representing a line bundle $L\in\Pic(A)$.  Up to coboundary, we
  can write $Z_{u'} = c(u') \chi^{\lambda(u')}$ for some $c(u')\in K^\times$ and
  $\lambda(u')\in M$, such that $u'\mapsto\lambda(u')$ is a homomorphism from
  $M'$ to $M$, and for $u_1',u_2'\in M'$ one has
  \begin{equation}\label{eq:c.t.relation.toric}
    c(u_1'+u_2') c(u_1')\inv c(u_2')\inv = \chi^{\lambda(u_2')}(u_1') =
    t(u_1',\lambda(u_2'))
  \end{equation}
  and $c(0) = 1$.  A triple corresponding to the line bundle $L$ via
  Theorem~\ref{thm:descend.lbs} is $(\sO,\lambda,c)$.  For $u\in M$, the
  coboundary of $\chi^u\in\sO(\bT^\an)^\times$ is $u'\mapsto t(u',u)$, so that
  $(\sO,\lambda,c)$ and $(\sO,\lambda,t(\scdot,u)c)$ define the same line
  bundle.  Unwrapping the definitions and using Example~\ref{eg:toric.1}, we
  find that the trivialization $\epsilon_u$ of~\eqref{eq:epsilon_u} reduces to
  the homomorphism $t(\scdot,u)$ in this case.
\end{eg}

\subsection{Translations of line bundles}\label{par:trans.lbs}
To $x'\in E'(K)$ we can associate the translation-invariant line bundle $L_{x'}$
on $B$ corresponding to $q'(x')\in B'(K)$: that is,
$L_{x'} = P_{B\times q'(x')}$, where $P\to B\times B'$ is the Poincar\'e bundle.
For $u'\in M'$ we have $E'_{u'} = P_{\Phi(u')\times B'}$, so
$e_{u'}(x')\in P_{\Phi(u')\times q'(x')} = (L_{x'})_{\Phi(u')}$.  It turns out
that $u'\mapsto e_{u'}(x')$ defines a homomorphism $c_{x'}\colon M'\to L_{x'}$
over $\Phi\colon M'\to B$, where we regard $L_{x'}$ as an extension of $B$ by
$\bG_m$.  In fact more is true:

\begin{prop}[{\cite[Corollaries~4.11 and~4.12]{bosch_lutkeboh91:degenerat_abelian_varieties}}]
  \label{prop:invariant.lbs}
  The association $x'\mapsto (L_{x'},c_{x'})$ defines a bijection between points
  of $E'(K)$ and isomorphism classes of pairs $(L,c)$, where $L$ is a
  translation-invariant line bundle on $B$ and $c\colon M'\to L$ is a
  homomorphism lifting $\Phi\colon M'\to B$.  Moreover, $(L_{x'},0,c_{x'})$ is a
  triple as in Theorem~\ref{thm:descend.lbs} giving rise to the
  translation-invariant line bundle on $A$ corresponding to $p'(x')\in A'(K)$.
\end{prop}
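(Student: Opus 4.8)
The plan is to establish the two assertions separately. The bijection is a comparison of two torsors under $\bT'(K)$ lying over $B'(K)$; the statement about the associated line bundle then follows from Theorem~\ref{thm:descend.lbs} together with the compatibility of the uniformizations of $A$ and $A'$ through the Poincar\'e bundle. First I would check that $c_{x'}\colon u'\mapsto e_{u'}(x')$ is a homomorphism $M'\to L_{x'}$ lifting $\Phi$, where $L_{x'}=P_{B\times q'(x')}$ is regarded as an extension of $B$ by $\bG_m$. The pushout diagram~\eqref{pushout}, applied to the Raynaud extension of $A'$, is functorial in the character: pushing out along $\chi^{u_1'}\chi^{u_2'}=\chi^{u_1'+u_2'}$ identifies $E'_{u_1'+u_2'}$ with $E'_{u_1'}\tensor E'_{u_2'}$ and $e_{u_1'+u_2'}$ with $e_{u_1'}\tensor e_{u_2'}$. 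Evaluating at $x'$, and using that the fibre over $q'(x')$ of $E'_{u_1'}\tensor E'_{u_2'}$ — with $E'_{u_j'}=P_{\Phi(u_j')\times B'}$ — is computed by the biextension multiplication of $P$ in its first variable, which is precisely the group law of the extension $L_{x'}$ of $B$ by $\bG_m$, one obtains $c_{x'}(u_1'+u_2')=c_{x'}(u_1')\cdot c_{x'}(u_2')$, with $c_{x'}(u')$ lying in the fibre of $L_{x'}$ over $\Phi(u')$, so that $c_{x'}$ lifts $\Phi$. Compatibility of $c_{x'}$ with the rigidifications — needed so that $(L_{x'},0,c_{x'})$ is a triple of the kind occurring in Theorem~\ref{thm:descend.lbs} — holds because $E'_0$ is the trivial extension with $e_0$ its canonical section and the rigidifications of $P$ are compatible with its biextension structure.

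Next I would run the torsor argument. Both $E'(K)$ and the set $\mathcal{C}$ of isomorphism classes of pairs $(L,c)$ fibre over $B'(K)$: the former via $q'$, whose fibres are torsors under $\bT'=\ker q'$ over a point and hence nonempty since $K$ is algebraically closed; the latter via $(L,c)\mapsto[L]$, whose fibre over $b'$ is the set of homomorphic lifts of $\Phi$ into a fixed extension of $B$ by $\bG_m$ of class $b'$. This last set is nonempty, since $M'$ is free, so every extension of $M'$ by $\bG_m$ splits, and it is a torsor under $\Hom(M',\bG_m)=\bT'(K)$; the choice of isomorphism is immaterial, because an extension of $B$ by $\bG_m$ has no nontrivial automorphism. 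The map $x'\mapsto(L_{x'},c_{x'})$ covers the identity on $B'(K)$, as the Poincar\'e biextension realizes the Barsotti--Weil identification of $B'$ with the group of extensions of $B$ by $\bG_m$, so $[L_{x'}]=q'(x')$; and it is $\bT'(K)$-equivariant, because~\eqref{pushout} gives $e_{u'}|_{\bT'}=\chi^{u'}$, whence $c_{x'+s}(u')=c_{x'}(u')\,e_{u'}(s)=\chi^{u'}(s)\,c_{x'}(u')$ for $s\in\bT'(K)$, which is the torsor action of $s\in\Hom(M',\bG_m)$. A morphism of torsors over a fixed base covering the identity is an isomorphism, so $x'\mapsto(L_{x'},c_{x'})$ is a bijection.

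For the second assertion, recall that $L_{x'}$ is translation-invariant, so $\phi_{L_{x'}}=0$; thus the condition $\Phi'\circ\lambda=\phi_{L_{x'}}\circ\Phi$ of Theorem~\ref{thm:descend.lbs} holds trivially with $\lambda=0$ (both sides vanish), and the cocycle identity there reduces to $c_{x'}(u_1'+u_2')\tensor c_{x'}(u_1')\inv\tensor c_{x'}(u_2')\inv=t(u_1',0)=1$, which is the homomorphism property established above. So $(L_{x'},0,c_{x'})$ is a genuine triple, with associated line bundle $L_A(x')$; since $\lambda=0$ and $\phi_{L_{x'}}=0$ force $\phi_{L_A(x')}=0$, the bundle $L_A(x')$ is translation-invariant, and conversely $\phi_{L_A}=0$ forces $\lambda=0$ by functoriality of the uniformization, so every translation-invariant bundle on $A$ arises from such a triple. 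Using $q'(u)=\Phi'(u)=[E_u]$ and $e_{u'}(u)=e_u(u')=t(u',u)=\epsilon_u(u')$ — see~\eqref{eq:trivialization.biext} and~\eqref{eq:epsilon_u} — one checks for $u\in M$ that $L_{x'+u}\tensor L_{x'}\inv\cong E_u$ and $c_{x'+u}\tensor c_{x'}\inv\cong\epsilon_u$, so $(L_{x'+u},0,c_{x'+u})$ and $(L_{x'},0,c_{x'})$ are equivalent; the same $\bT'(K)$-equivariance shows $(L_{x_1'},c_{x_1'})$ and $(L_{x_2'},c_{x_2'})$ are equivalent only when $x_1'-x_2'\in M$. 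Combined with the first part and Theorem~\ref{thm:descend.lbs}, this exhibits $x'\mapsto L_A(x')$ as inducing a bijection $A'(K)=E'(K)/M\isom\Pic^0(A)$.

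It remains to identify this bijection with the canonical identification $\Pic^0(A)=A'(K)$. Here I would pull the Poincar\'e bundle of $A\times A'$ back to $E^\an\times E'^\an$: the duality theory of~\cite{bosch_lutkeboh91:degenerat_abelian_varieties} underlying the trivialization $t$ identifies this pullback with $(q\times q')^*P$ equipped with an explicit $(M'\times M)$-linearization, and restricting to $E^\an\times\{x'\}$ recovers precisely $q^*L_{x'}$ together with the $M'$-linearization by $c_{x'}$, which is the descent datum defining $L_A(x')$; hence $L_A(x')$ is the translation-invariant line bundle attached to $p'(x')$. I expect this last step — matching our descent datum with the one cut out by the Poincar\'e bundle, which requires tracking the biextension, rigidification, and cubical structures through the pushout~\eqref{pushout} in the semi-abelian setting — to be the main obstacle; the rest is formal torsor theory and bookkeeping.
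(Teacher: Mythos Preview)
The paper does not prove this proposition; it is stated with a citation to Bosch--L\"utkebohmert~\cite[Corollaries~4.11 and~4.12]{bosch_lutkeboh91:degenerat_abelian_varieties} and used as a black box, with only the special case $x'=u\in M$ unpacked in the paragraph that follows.  So there is no argument in the paper to compare against, and your proposal should be read as an attempt to reconstruct the Bosch--L\"utkebohmert proof.

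Your approach is sound.  The torsor argument for the bijection is the natural one and is correct as written: both $E'(K)$ and the set of pairs $(L,c)$ fibre over $B'(K)$, the fibres are $\bT'(K)$-torsors, and the equivariance $c_{x'+s}(u')=\chi^{u'}(s)\,c_{x'}(u')$ follows immediately from $e_{u'}|_{\bT'}=\chi^{u'}$.  The verification that $(L_{x'},0,c_{x'})$ is a valid triple is routine once you have the homomorphism property of $c_{x'}$, and your reduction of the equivalence relation on triples to the quotient by $M$ is exactly what the paper spells out after the statement.  Your final step---pulling back the Poincar\'e bundle $P_A$ on $A\times A'$ to $E^\an\times E'^\an$ and matching the resulting $(M'\times M)$-linearized bundle with $(q\times q')^*P_B$ so that restriction to $E^\an\times\{x'\}$ recovers the descent datum $(q^*L_{x'},c_{x'})$---is indeed the substantive content, and it is precisely what Bosch--L\"utkebohmert carry out in their \S4 (see especially their Theorem~4.6 and the discussion leading to Corollary~4.11).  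You are right that this requires careful tracking of cubical and biextension structures; it is not something one can write down in a paragraph, but your outline points at the correct mechanism.
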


See also the paragraph before the statement of Theorem~6.8
in~\cite{bosch_lutkeboh91:degenerat_abelian_varieties}.  When $x' = u\in M\subset E'(K)$ we have
$q'(u) = \Phi'(u)$, so $L_u = E_u$.  Since $e_{u'}(u) = e_u(u')$ it follows that
$c_u = \epsilon_u$ as in Example~\ref{eg:linearize.Eu}, so $(L_u,0,c_u)$ gives
rise to the trivial line bundle on $A$.  This is consistent with
Theorem~\ref{thm:descend.lbs} and the fact that $p'(u) = 0\in A'(K)$.

\begin{prop}\label{prop:triple.for.translate}
  Let $L_A$ be a line bundle on $A$, and let $(L,\lambda,c)$ be a triple
  corresponding to $L_A$ as in Theorem~\ref{thm:descend.lbs}.  Let
  $\phi = \phi_{L_A}\colon A\to A'$ be the homomorphism induced by $L_A$ and let
  $\td\phi\colon E^\an\to E'^\an$ be the lift to universal covers.  Let
  $x\in E(K)$, $y = p(x)\in A(K)$, $z = q(x)\in B(K)$, and
  $x' = \td\phi(x)\in E'(K)$.  Then $T_z^*L\cong L_{x'}\tensor L$, and
  $(T_z^*L, \lambda, c_{x'}\tensor c)$ is a triple corresponding to $T_y^*L_A$.
\end{prop}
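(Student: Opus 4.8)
The plan is to track how translation by $z = q(x) \in B(K)$ interacts with the descent data, using the universal identity $(\Id\times\phi_L)^*P = \sD_2 L$ together with Proposition~\ref{prop:invariant.lbs}. First I would establish the isomorphism $T_z^*L \cong L_{x'}\tensor L$ at the level of line bundles on $B$. Recall that $L_{x'} = P_{B\times q'(x')}$, and that $x' = \td\phi(x)$ satisfies $q'(x') = \phi_L(q(x)) = \phi_L(z)$, since $\td\phi$ lifts $\phi = \phi_{L_A}$ and the Raynaud extensions of $A,A'$ are compatible with the maps $q,q'$ (this is exactly the content of the two Raynaud crosses in~\S\ref{par:unif.dual}, together with $\Phi'\circ\lambda = \phi_L\circ\Phi$). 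Thus $L_{x'} = P_{B\times\phi_L(z)} = (\Id\times\phi_L)^*P|_{B\times z}$, which by the universal identity equals $(\sD_2 L)|_{B\times z} = T_z^*L\tensor L\inv$ (the $p_1^*L\inv$ factor restricts to the constant bundle $L_z\inv$, which is trivial as an abstract line bundle, or rather is absorbed into the rigidification bookkeeping). Rearranging gives $T_z^*L\cong L_{x'}\tensor L$.

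Next I would verify that $(T_z^*L,\lambda,c_{x'}\tensor c)$ is a legitimate triple in the sense of Theorem~\ref{thm:descend.lbs}, i.e.\ that it satisfies the two displayed compatibility conditions with the \emph{same} $\lambda$. The homomorphism condition $\Phi'\circ\lambda = \phi_{T_z^*L}\circ\Phi$ is immediate because translation-invariance of $L_{x'}$ gives $\phi_{T_z^*L} = \phi_{L_{x'}\tensor L} = \phi_{L_{x'}} + \phi_L = \phi_L$, so the condition is identical to the one for $(L,\lambda,c)$. For the cocycle condition on the trivialization, I would use that $c$ satisfies $c(u_1'+u_2')\tensor c(u_1')\inv\tensor c(u_2')\inv = t(u_1',\lambda(u_2'))$ while $c_{x'}\colon M'\to L_{x'}$ is a \emph{homomorphism} lifting $\Phi$ (Proposition~\ref{prop:invariant.lbs}), hence contributes trivially to the coboundary; therefore $c_{x'}\tensor c$ satisfies exactly the same relation as $c$. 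This is the step where one must be careful about the rigidifications and the canonical identifications $\sD_2(T_z^*L) = (\Id\times\phi_{T_z^*L})^*P = (\Id\times\phi_L)^*P$ used to make sense of the right-hand side $t(u_1',\lambda(u_2'))$; I expect this bookkeeping to be the main technical obstacle, though conceptually it is forced.

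Finally I would identify the line bundle on $A$ produced by the triple $(T_z^*L,\lambda,c_{x'}\tensor c)$ with $T_y^*L_A$. Using the compatibility of the correspondence in Theorem~\ref{thm:descend.lbs} with tensor products, the triple $(L_{x'}\tensor L,\lambda,c_{x'}\tensor c)$ is the tensor product of the triples $(L_{x'},0,c_{x'})$ and $(L,\lambda,c)$. By Proposition~\ref{prop:invariant.lbs}, the first factor descends to the translation-invariant line bundle on $A$ attached to $p'(x') \in A'(K)$; but $p'(x') = p'(\td\phi(x)) = \phi_{L_A}(p(x)) = \phi(y)$, and by definition of $\phi_{L_A}$ the translation-invariant bundle attached to $\phi(y)$ is precisely $T_y^*L_A\tensor L_A\inv$. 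The second factor descends to $L_A$. Hence the product descends to $(T_y^*L_A\tensor L_A\inv)\tensor L_A = T_y^*L_A$, as claimed. The only remaining point is to check that the identifications $y = p(x)$, $z = q(x)$, $x' = \td\phi(x)$ are mutually consistent under $p' \circ \td\phi = \phi \circ p$, which holds because $\td\phi$ was defined as the lift of $\phi$ to universal covers.
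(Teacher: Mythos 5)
Your argument is correct and follows essentially the same route as the paper's proof: you use $q'\circ\td\phi = \phi_L\circ q$ to identify $L_{x'}$ with $T_z^*L\tensor L\inv$, use $p'\circ\td\phi = \phi\circ p$ together with Proposition~\ref{prop:invariant.lbs} to see that $(L_{x'},0,c_{x'})$ descends to $T_y^*L_A\tensor L_A\inv$, and conclude by compatibility of the correspondence with tensor products, merely spelling out the triple-condition bookkeeping that the paper leaves implicit. (The only blemish is a harmless indexing slip: on $B\times\{z\}$ it is the $p_2^*L\inv$ factor of $\sD_2L$, not $p_1^*L\inv$, that restricts to the constant bundle $L_z\inv$.)
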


\begin{proof}
  As explained in~\parref{par:polarizations} below, we have
  $q'\circ\td\phi = \phi_L\circ q$.  By definition $L_{x'}$ is the line bundle
  defined by $q'(x') = q'(\td\phi(x))$, and $T_z^*L\tensor L\inv$ is the line
  bundle defined by $\phi_L(z) = \phi_L(q(x))$.  Hence
  $L_{x'}\cong T_z^*L\tensor L\inv$.  Similarly, $T_y^*L_A\tensor L_A\inv$ is
  the line bundle defined by $\phi(y)\in A'(K)$; as
  $\phi(y) = p'(\td\phi(x)) = p'(x')$, this line bundle corresponds to the
  triple $(L_{x'},0,c_{x'})$ by Proposition~\ref{prop:invariant.lbs}.  The
  result follows by compatibility of the formation of these triples with tensor
  product.
\end{proof}

\begin{defn}\label{def:translate.triple}
  With the notation in Proposition~\ref{prop:triple.for.translate}, we call the
  triple $(T_z^*L,\lambda,c_{x'}\tensor c)$ the \emph{translate} of
  $(L,\lambda,c)$ by $x\in E(K)$, and we write
  \begin{equation}\label{eq:translate.triple}
    T_x^*(L,\,\lambda,\,c) ~\coloneq~ (T_z^*L,\,\lambda,\,c_{x'}\tensor c).
  \end{equation}
\end{defn}

\begin{eg}\label{eg:toric.2bis}
  Suppose that $A$ has toric reduction, as in Examples~\ref{eg:toric.1}
  and~\ref{eg:toric.2}.  A point $x'\in\bT'(K)$ is equivalent to a homomorphism
  $c_{x'}\colon M'\to K^\times$, and the cocycle $Z\colon u'\mapsto c_{x'}(u')$
  represents a translation-invariant line bundle on $A$, as explained
  in Example~\ref{eg:toric.2}.  This cocycle is a coboundary if and only if
  $x' = u\in M$, in which case $c_u(u') = \chi^{u'}(u) = \chi^u(u') = t(u',u)$.

  If $L_A$ is a line bundle on $A$ with corresponding cocycle
  $Z_{u'} = c(u')\chi^{\lambda(u')}$, then $T_y^*L_A$ corresponds to the cocycle
  $T_{x'}^*Z$. One calculates $(T_{x'}^*Z)_{u'} = c_{x'}(u')c(u')\chi^{\lambda(u')}$,
  which we write as the triple $(\sO,\lambda,c_{x'}\scdot c)$.
\end{eg}

\subsection{Theta functions}\label{par:theta.funcs.mixed}
We will also need an analytic description of \emph{sections} of line bundles on
$A$, building on Theorem~\ref{thm:descend.lbs}.  At this point it is convenient
to pass to invertible sheaves,
following~\cite[\S4]{bosch_lutkeboh91:degenerat_abelian_varieties}.  Our
correspondence between invertible sheaves and line bundles is contravariant: if
$L$ is a line bundle then its corresponding invertible sheaf is
$\sL=\HHom(L,\sO)$.  In particular, a section $s$ of $L$ gives rise to a section
of $\sL\inv$, also denoted $s$.

Let $\sL$ be a rigidified invertible sheaf on $B$.  There is a canonical
decomposition of $H^0(E^\an,q^*\sL)$ as a certain completed direct sum
\[ H^0(E^\an,\,q^*\sL) = \Dsum_{u\in M}^{\wedge} H^0(B,\,\sL\tensor\sE_u)\tensor e_u, \]
where $\sE_u$ is the invertible sheaf on $B$ corresponding to $E_u$ and
$e_u\in H^0(B,\sE_u\inv)$.  In other words, any $f\in H^0(E^\an,\,q^*\sL)$ has a
canonical \emph{Fourier decomposition}
\begin{equation}\label{eq:fourier.decomp}
  f = \sum_{u\in M} a_u\tensor e_u \qquad a_u\in H^0(B,\,\sL\tensor\sE_u).
\end{equation}

\begin{rem}\label{rem:formal.atlas}
  Let $V\subset B^\an$ be a formal affinoid over which the formal Raynaud
  extension~\eqref{eq:raynaud.formal} splits, and which trivializes $\sL$ (as a
  formal line bundle).  Then $q\inv(V)\cong V\times\bT$, and $f|_V$ has a unique
  Laurent series decomposition $f = \sum_{u\in M}a_u\chi^u$ with $a_u\in\sO(V)$.
  The Fourier decomposition~\eqref{eq:fourier.decomp} is the globalization of
  this decomposition.  Moreover, for $x\in E^\an$ one has
  \begin{equation}\label{eq:fourier.norm}
    \|f(x)\|_{q^*\sL} = \left|\sum a_u\chi^u(x)\right|
  \end{equation}
  essentially by the definition of the model metric $\|\scdot\|_\sL$.
  (This also explains the convergence in the completed
  direct sum above.)  See~\cite[\S5]{bosch_lutkeboh91:degenerat_abelian_varieties}.
\end{rem}

\begin{prop}[{\cite[Proposition~5.2]{bosch_lutkeboh91:degenerat_abelian_varieties}}]
  \label{prop:invariant.section}
  Let $\sL_A$ be a rigidified invertible sheaf on $A$, and let $(\sL,\lambda,c)$ be a
  corresponding triple in Theorem~\ref{thm:descend.lbs}.  Let
  $f\in H^0(E^\an,q^*\sL)$ have Fourier decomposition
  $f = \sum_{u\in M} a_u\tensor e_u$.  Then the following are equivalent:
  \begin{enumerate}
  \item $f$ descends to a global section of $\sL_A$;
  \item $f = c(u')\tensor e_{\lambda(u')}\tensor T_{u'}^*f$ for all $u'\in M'$;
  \item for all $(u',u)\in M'\times M$ we have
    \begin{equation}\label{eq:invariant.cond}
      a_{u+\lambda(u')} = t(u',u)\tensor c(u')\tensor T_{\Phi(u')}^*a_u,
    \end{equation}
  \end{enumerate}
 ignoring canonical isomorphisms.
\end{prop}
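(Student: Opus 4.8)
The plan is to establish the chain of equivalences $(1)\Leftrightarrow(2)\Leftrightarrow(3)$, using the descent-theoretic description of the $M'$-linearization on $q^*\sL$ as a lever between the sheaf-theoretic condition $(2)$ and the coefficient-wise condition $(3)$.

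\medskip

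First I would prove $(1)\Leftrightarrow(2)$. By construction, $\sL_A$ is obtained from $(\sL,\lambda,c)$ by descent along the subgroup $M'\subset E^\an$ acting on $q^*\sL$; so a section $f\in H^0(E^\an,q^*\sL)$ descends to $H^0(A,\sL_A)$ if and only if it is invariant under the $M'$-linearization, i.e.\ fixed by the action of every $u'\in M'$. Now recall from the paragraph following Theorem~\ref{thm:descend.lbs} that the isomorphism $q^*\sL\isom T_{u'}^*q^*\sL$ implementing the action of $u'$ is (dually to the line-bundle statement) given by $f\mapsto c(u')\tensor e_{\lambda(u')}\tensor T_{u'}^*f$, modulo the canonical isomorphism $T_{\Phi(u')}^*\sL\cong \sL(\Phi(u'))\tensor\sE_{\lambda(u')}\tensor\sL$ pulled back via $q$. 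Thus $f$ is $M'$-invariant precisely when $f = c(u')\tensor e_{\lambda(u')}\tensor T_{u'}^*f$ for all $u'\in M'$, which is exactly $(2)$. The only thing to check here is that it suffices to test invariance on generators of $M'$, which follows because the linearization is a genuine group action (its cocycle condition is encoded in the relation $c(u_1'+u_2')\tensor c(u_1')\inv\tensor c(u_2')\inv = t(u_1',\lambda(u_2'))$ from Theorem~\ref{thm:descend.lbs}).

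\medskip

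Next I would prove $(2)\Leftrightarrow(3)$ by expanding both sides of the identity in $(2)$ in terms of the Fourier decomposition~\eqref{eq:fourier.decomp} and comparing coefficients. The point is that $T_{u'}^*f$ must be re-expanded as a Fourier series on $E^\an$: translating by $u'\in M'\subset E^\an$ sends $e_u$ to (a scalar times) $e_u$ twisted by the action on the relevant fiber, and the scalar that appears is precisely $t(u',u)$ by the definition~\eqref{eq:trivialization.biext} of the trivialization $t$ — indeed $e_u(u') = t(u',u)$. Combining this with the multiplication $e_{\lambda(u')}\tensor e_u \cong e_{u+\lambda(u')}$ (up to the canonical identification coming from the pushout~\eqref{pushout} and the cubical structure) and tracking the $c(u')$ factor, the coefficient of $e_{u+\lambda(u')}$ on the right-hand side of $(2)$ becomes $t(u',u)\tensor c(u')\tensor T_{\Phi(u')}^*a_u$, while the coefficient of the same $e$-monomial on the left is $a_{u+\lambda(u')}$. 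Matching these for all $(u',u)$ gives~\eqref{eq:invariant.cond}, and conversely summing~\eqref{eq:invariant.cond} against $e_{u+\lambda(u')}$ recovers $(2)$; the reindexing $u\mapsto u+\lambda(u')$ is a bijection of $M$ since $\lambda(u')\in M$, so no coefficients are lost.

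\medskip

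The main obstacle is bookkeeping the canonical isomorphisms — the identification $\sD_2\sL = (\Id\times\phi_L)^*P$, the trivialization of $q^*E_u$ by $e_u$, and the constant line bundles $\sL(\Phi(u'))$ appearing in the translation formula — so that the scalars extracted in the Fourier comparison are literally $t(u',u)$ and $c(u')$ rather than these up to an untracked unit. Concretely one should work locally over a formal affinoid $V\subset B^\an$ as in Remark~\ref{rem:formal.atlas}, where $q\inv(V)\cong V\times\bT$ and $f|_V = \sum a_u\chi^u$ is an honest Laurent series; there the action of $u'$ becomes an explicit substitution $\chi^u\mapsto t(u',u)\chi^u$ composed with translation by $\Phi(u')$ on $V$, and the claimed coefficient identity can be read off directly, then glued. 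Since the referenced Proposition~5.2 of~\cite{bosch_lutkeboh91:degenerat_abelian_varieties} already carries out this verification, I would either cite it for the delicate normalization of the canonical isomorphisms or reproduce the local computation in a single displayed line.
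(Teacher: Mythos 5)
Your outline is correct, and it essentially coincides with what the paper itself does: the paper gives no proof of Proposition~\ref{prop:invariant.section}, importing it verbatim as \cite[Proposition~5.2]{bosch_lutkeboh91:degenerat_abelian_varieties} and explicitly declining to unwrap the canonical isomorphisms (``for our purposes they are not important''), which is exactly the point at which you also defer to that reference. Your sketch of the underlying argument---descent invariance under the $M'$-linearization for (1)$\Leftrightarrow$(2), and comparison of Fourier coefficients using $e_u(u') = t(u',u)$, the identification $e_{\lambda(u')}\tensor e_u \cong e_{u+\lambda(u')}$, and the reindexing $u\mapsto u+\lambda(u')$ for (2)$\Leftrightarrow$(3), carried out over a formal affinoid splitting as in Remark~\ref{rem:formal.atlas}---is a faithful reconstruction of the cited proof, with only the harmless over-caution about testing invariance on generators of $M'$ (condition (2) already quantifies over all $u'$).
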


See the proof of~\cite[Proposition~5.2]{bosch_lutkeboh91:degenerat_abelian_varieties} and the
internal references therein to unwrap the canonical isomorphisms involved.  For
our purposes they are not important; we will only use
Corollary~\ref{cor:fourier.invariant.norm}, which is a consequence.

\begin{defn}\label{defn:theta.function}
  A global section $f\in H^0(E^\an,q^*\sL)$ which descends to a section of
  $\sL_A$ is called a \emph{theta function} with respect to $(\sL,\lambda,c)$.
  We say that $f$ is \emph{invariant} under the linearization action of $M'$.
\end{defn}

We will usually call $f$ simply a ``theta function for $\sL$''.
Note that theta functions for $\sL$ are not intrinsic to $\sL_A$, as
there are multiple triples $(\sL,\lambda,c)$ giving rise to
$\sL_A$.

\begin{cor}\label{cor:fourier.invariant.norm}
  With the notation in Proposition~\ref{prop:invariant.section}, for
  $x\in B^\an$ and $(u',u)\in M'\times M$ we have
  \[ \|a_{u+\lambda(u')}(x)\|_{\sL\tensor\sE_u}
  = \|t(u',u)\|_P\cdot\|c(u')\|_{\sL}\cdot\|a_u(x + \Phi(u'))\|_{\sL\tensor\sE_u} \]
  if $f$ is a theta function.
\end{cor}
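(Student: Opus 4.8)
The plan is to derive this directly from condition~(3) of Proposition~\ref{prop:invariant.section} by taking norms, using the fact that metrics are multiplicative under tensor product and compatible with pullback. Recall from~\parref{par:model.metrics} that for metrized line bundles $(\sL_1,\|\scdot\|_1)$ and $(\sL_2,\|\scdot\|_2)$ there is a metric on $\sL_1\tensor\sL_2$ with $\|s_1\tensor s_2(x)\| = \|s_1(x)\|_1\|s_2(x)\|_2$, and that for a morphism $\psi\colon Y\to X$ the pullback metric on $\psi^*\sL$ satisfies $\|(\psi^*s)(y)\|_{\psi^*\sL} = \|s(\psi(y))\|_\sL$.

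First I would fix $(u',u)\in M'\times M$ and apply $\|\scdot\|$ to both sides of the identity
\[ a_{u+\lambda(u')} = t(u',u)\tensor c(u')\tensor T_{\Phi(u')}^*a_u \]
from~\eqref{eq:invariant.cond}, evaluated at a point $x\in B^\an$. The left side is $\|a_{u+\lambda(u')}(x)\|_{\sL\tensor\sE_u}$. For the right side, I would observe that $t(u',u)\in P_{\Phi(u'),\Phi'(u)}$ contributes the factor $\|t(u',u)\|_P$, that $c(u')$ is a section of $\Phi^*\sL$ (equivalently, a point of the fiber of $\sL$ over $\Phi(u')$) and contributes $\|c(u')\|_\sL$, and that $T_{\Phi(u')}^*a_u$ evaluated at $x$ equals $a_u$ evaluated at $x+\Phi(u')$ by the pullback compatibility of the model metric, contributing $\|a_u(x+\Phi(u'))\|_{\sL\tensor\sE_u}$. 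Multiplicativity of the norm under tensor product then yields the displayed formula.

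The one point that requires a little care—and which I expect to be the main (minor) obstacle—is bookkeeping the ``canonical isomorphisms'' that Proposition~\ref{prop:invariant.section} explicitly ignores: namely the identification of $\sL\tensor\sE_{u+\lambda(u')}$, the target of $a_{u+\lambda(u')}$, with the tensor product of the various line bundles on the right-hand side (involving $P$, the constant bundle $\sL(\Phi(u'))$, and $T_{\Phi(u')}^*(\sL\tensor\sE_u)$), and checking that all of these identifications are \emph{isometries} for the relevant model metrics. This reduces to the fact that model metrics are functorial for the canonical integral models on abelian varieties with good reduction (so pullback along translations and the identifications coming from the Poincaré bundle are metric-preserving), together with the observation that the model metric on $\sE_u$ is exactly the one implicitly used in~\eqref{eq:trop.E} via $\|e_u(\scdot)\|_{E_u}$. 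Once one grants that these canonical isomorphisms are isometries—which is the content of the cited results in~\cite[\S4,\S5]{bosch_lutkeboh91:degenerat_abelian_varieties}—the corollary is just the norm of Proposition~\ref{prop:invariant.section}(3), so the proof is essentially one line.
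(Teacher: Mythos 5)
Your proposal is correct and follows essentially the same route as the paper: take norms in the identity \eqref{eq:invariant.cond} from Proposition~\ref{prop:invariant.section}, use multiplicativity of the model metrics under tensor product and pullback, and check that the suppressed canonical isomorphisms (translation and Poincar\'e-bundle identifications over the good-reduction model) are isometries. The paper's proof handles that last point in exactly the way you anticipate, observing that by the construction of the formal metric the sections differ under these identifications by an element of norm $1$.
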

\begin{proof}
It follows from Proposition 3.19 that 
\[ a_{u+\lambda(u')} = t(u',u)\tensor c(u')\tensor T_{\Phi(u')}^*a_u, \]
where we have used the canonical isomorphisms 
\[ T^*_{u'}q^*\sL=q^*T^*_{\Phi(u')}\sL=q^*\sL(\Phi(u'))\otimes q^*\sE_{u}\otimes q^*\sL ,\quad T^*_{u'}\sE_{u}=\sE_{u}.\] 
By the construction of the formal metric, it follows that the sections of these line bundles will differ, via this isomorphisms,  by an element of norm $1$. This shows the compatibility of the metrics which proves the claim. 
\end{proof}

\begin{eg}\label{eg:toric.3}
  Suppose that $A$ has toric reduction, as in Examples~\ref{eg:toric.1},
  \ref{eg:toric.2}, and~\ref{eg:toric.2bis}.  Let $L$ be a line bundle on $A$,
  corresponding to a cocycle $u'\mapsto c(u')\chi^{\lambda(u')}$ as in
  Example~\ref{eg:toric.2}.  An analytic function on $\bT$ has the form (Fourier
  decomposition) $f = \sum_{u\in M} a_u\chi^u$ for $a_u\in K$, where
  $\val(a_u)+\angles{u,v}\to\infty$ on the complements of finite subsets of $M$
  for all $v\in N_\R$.  The function $f$ descends to a section of $\sL$ if and
  only if $f = c(u')\chi^{\lambda(u')}T_{u'}^*f$ for all $u'\in M'$, i.e., if
  and only if
  \[ \sum a_{u+\lambda(u')}\,\chi^{u+\lambda(u')}
  = c(u')\chi^{\lambda(u')}\sum a_u\chi^u(u')\,\chi^u
  = \sum t(u',u) c(u') a_u\,\chi^{u+\lambda(u')}. \]
  This recovers~\eqref{eq:invariant.cond} in the totally degenerate case.
\end{eg}

\subsection{Polarizations}\label{par:polarizations}
Let $L_A$ be a rigidified line bundle on $A$, with corresponding triple
$(L,\lambda,c)$ as in Theorem~\ref{thm:descend.lbs}.  Let $\phi_{L_A}\colon A\to A'$
be the homomorphism induced by $L_A$ and let $\td\phi_{L_A}\colon E^\an\to E'^\an$ be
the lift to universal covers, so $\td\phi_{L_A}(M')\subset M$.  It is automatic that
$\td\phi_{L_A}(\bT^\an)\subset\bT'^\an$, so $\td\phi_{L_A}$ induces homomorphisms
$\lambda_{\bT}\colon \bT\to\bT'$ and $\lambda_B\colon B\to B'$.
See~\cite[Proposition~3.5]{bosch_lutkeboh91:degenerat_abelian_varieties}.

\begin{prop}{\cite[Proposition~6.10, Remark~6.11]{bosch_lutkeboh91:degenerat_abelian_varieties}}
  \label{prop:symmetric.phiL}
  In the above situation, the following homomorphisms $M'\to M$ coincide with
  $\lambda$:
  \begin{enumerate}
  \item The homomorphism on character groups induced by $\lambda_\bT$.
  \item The restriction of $\td\phi_{L_A}$ to $M'$.
  \end{enumerate}
  Moreover, $\lambda_B = \phi_L$.
\end{prop}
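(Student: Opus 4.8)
The plan is to read off all three identifications from the functoriality of the Raynaud uniformization under $\phi_{L_A}$, together with the descent presentation of $L_A$ by the triple $(L,\lambda,c)$ of Theorem~\ref{thm:descend.lbs}; the content is essentially~\cite[Proposition~6.10, Remark~6.11]{bosch_lutkeboh91:degenerat_abelian_varieties} translated into the present conventions. Recall from~\parref{par:polarizations} that $\phi_{L_A}$ lifts to $\td\phi_{L_A}\colon E^\an\to E'^\an$, carrying $M'$ into $M$ and $\bT^\an$ into $\bT'^\an$, hence inducing $\lambda_\bT\colon\bT\to\bT'$, $\lambda_B\colon B\to B'$, and $\td\phi_{L_A}|_{M'}\colon M'\to M$; a priori none of these have anything to do with $(L,\lambda,c)$. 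One cannot invoke Proposition~\ref{prop:triple.for.translate} in what follows, since its proof already uses the relation $q'\circ\td\phi_{L_A}=\phi_L\circ q$, which is part of the present statement; instead one redoes the relevant translation computations directly.

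I would first prove $\lambda_B=\phi_L$. For $x\in E(K)$ put $y=p(x)$, $z=q(x)$, $x'=\td\phi_{L_A}(x)$, so $\phi_{L_A}(y)=p'(x')$ and $\lambda_B(z)=q'(x')$. On one hand, $\phi_{L_A}(y)\in A'(K)$ classifies the translation-invariant bundle $T_y^*L_A\tensor L_A\inv$, which by Proposition~\ref{prop:invariant.lbs} corresponds to $(L_{x'},0,c_{x'})$ with $L_{x'}=P_{B\times q'(x')}$. On the other hand, pulling the descent datum of $L_A$ back along the translation $T_x$ of $E$ (using $q\circ T_x=T_z\circ q$) presents the same bundle as one descended from $T_z^*L\tensor L\inv$. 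Comparing, $L_{x'}\cong T_z^*L\tensor L\inv$; the left side is classified by $q'(x')=\lambda_B(z)$ and the right by $\phi_L(z)$, so $\lambda_B(z)=\phi_L(z)$. As $q$ is surjective, $\lambda_B=\phi_L$.

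Next I would identify $\td\phi_{L_A}|_{M'}$ with $\lambda$, running the same translation but with $x=u'\in M'$, so $z=\Phi(u')$ and $x'=\td\phi_{L_A}(u')\in M$. Translating the $M'$-linearization $f\mapsto c(u'')\tensor e_{\lambda(u'')}\tensor T_{u''}^*f$ of $q^*\sL$ by $T_{u'}$ multiplies its $u''$-component by $e_{\lambda(u'')}(u')=t(u',\lambda(u''))$, which by the cocycle relation defining $c$ equals the coboundary $c(u'+u'')\tensor c(u')\inv\tensor c(u'')\inv$. Since $t(u',\lambda(u''))=t(u'',\lambda(u'))$ — the symmetry forced by the existence of $c$ in Theorem~\ref{thm:descend.lbs} — this coboundary is exactly the one attached to $\epsilon_{\lambda(u')}$ (Example~\ref{eg:linearize.Eu}). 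Hence the translate of $L_A$'s descent datum by $T_{u'}$ differs from the original by the coboundary of $\epsilon_{\lambda(u')}$; comparing with Proposition~\ref{prop:invariant.lbs} identifies $c_{x'}$ with $\epsilon_{\lambda(u')}=c_{\lambda(u')}$, so the injectivity of $x'\mapsto(L_{x'},c_{x'})$ gives $\td\phi_{L_A}(u')=\lambda(u')$, which is~(2). Then~(1) follows: both $\lambda_\bT^*\colon M'\to M$ and $\td\phi_{L_A}|_{M'}\colon M'\to M$ are read off from the interaction of $\phi_{L_A}$ with the biextension $(\Phi\times\Phi')^*P$ and its symmetric trivialization $t$, and the symmetry $e_u(u')=e_{u'}(u)$ of~\eqref{eq:trivialization.biext} forces them to agree, so both equal $\lambda$ by~(2) (equivalently, invoke the functoriality of the Bosch--L\"utkebohmert duality and the symmetry $\phi_{L_A}'=\phi_{L_A}$).

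The main obstacle is not any single step but the coherent bookkeeping of the canonical isomorphisms — in particular $T_{u'}^*q^*\sL\cong q^*(T_{\Phi(u')}^*\sL)$, as used in Corollary~\ref{cor:fourier.invariant.norm} — across the translation computations: it is this that upgrades ``equivalent via the index $\lambda(u')$'' to a genuine equality, rather than an equality only modulo $\ker(\Phi'\colon M\to B')$. This is precisely the verification made in~\cite[\S\S 4, 6]{bosch_lutkeboh91:degenerat_abelian_varieties}, so in practice one checks that the reversal of the roles of $M$ and $M'$ (see the remark after Example~\ref{eg:toric.1}) and the contravariance of our sheaf/line-bundle dictionary are accounted for, and then appeals to Proposition~6.10 and Remark~6.11 there.
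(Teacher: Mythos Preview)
The paper supplies no proof of this proposition: it is stated with the citation to \cite[Proposition~6.10, Remark~6.11]{bosch_lutkeboh91:degenerat_abelian_varieties} in its header and then used as a black box. Your proposal therefore goes beyond what the paper does, sketching how the three identifications should fall out of the descent presentation of Theorem~\ref{thm:descend.lbs}, and you are right to flag that Proposition~\ref{prop:triple.for.translate} cannot be invoked since its proof already consumes $\lambda_B=\phi_L$.

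As a sketch your argument is on the right track, and your closing paragraph --- conceding that the canonical-isomorphism bookkeeping is the real work and ultimately deferring to Bosch--L\"utkebohmert --- lands exactly where the paper does. If you wanted the sketch to stand alone, two spots would need tightening. In the argument for $\lambda_B=\phi_L$, comparing the two triples for $T_y^*L_A\tensor L_A\inv$ only pins down $L_{x'}$ up to an $E_u$-twist (Theorem~\ref{thm:descend.lbs} classifies triples up to equivalence), so a priori you obtain only $\lambda_B(z)-\phi_L(z)\in\Phi'(M)$; a connectedness argument on $B$ is still needed to conclude. And the passage from~(2) to~(1) is vague: $\lambda_\bT^*$ and $\td\phi_{L_A}|_{M'}$ come from the torus and lattice halves of the Raynaud cross respectively, and their agreement is genuinely the content of the biextension symmetry (this is what Remark~6.11 in Bosch--L\"utkebohmert records), not a formal consequence of~(2). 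Example~\ref{eg:toric.4} shows the shape of the verification in the totally degenerate case.
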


By Theorem~\ref{thm:descend.lbs}, $\phi_{L_A}$ is a polarization if and only if
$L$ is ample and $[\scdot,\lambda(\scdot)]$ is positive-definite.
In this case the degrees are related by
\[ \deg(\phi_{L_A}) = [M:\lambda(M')]^2\,\deg(\phi_L). \]
See also~\cite[Theorem~6.15]{bosch_lutkeboh91:degenerat_abelian_varieties}.

If $\phi_{L_A}$ is a principal polarization then $\lambda$ is an isomorphism and
$\phi_L$ is also a principal polarization.  In this case there is a unique
nonzero global section $a_0\in H^0(B,L)$ up to scaling.  Taking $u = 0$
in~\eqref{eq:invariant.cond}, the unique theta function $f$ of $L$ (up to
scaling) has Fourier expansion
\begin{equation}\label{eq:riemann.theta}
  f = \sum_{u'\in M'} 1\tensor c(u')\tensor
  T_{\Phi(u')}^*a_0\tensor e_{\lambda(u')}.
\end{equation}
We call $f$ the \emph{Riemann theta function} associated to $(L,\lambda,c)$.  

Recall that for $y\in A(K)$, the line bundle $T_y^*L_A$ defines the same
principal polarization $\phi_{L_A}$.

\begin{prop}\label{prop:val.riemann.theta}
  With the above notation, suppose that $\phi_{L_A}$ is a principal
  polarization.  Then there exists $y\in A(K)$ and a triple $(L,\lambda,c)$ for
  $T_y^*L_A$ such that, if $f = \sum_{u\in M} a_u\tensor e_u$ is the Fourier
  expansion of the Riemann theta function on $L$, then for $u'\in M'$ and
  $x\in B^\an$, we have
  \[ \|a_{\lambda(u')}(x)\|_{L\tensor E_u} =
  \|t(u',\lambda(u'))\|_P^{1/2}\cdot\|a_0(x+\Phi(u'))\|_{L\tensor E_u}. \]
  Moreover, we can choose $y$ and $(L,\lambda,c)$ such that
  \[ \big((-1)^*L,\,\lambda,\,(-1)^*c\big) ~=~
  \big(L,\,\lambda,\,c\big), \]
  where $(-1)\colon B\to B$ is negation and $(-1)^*c(u') = c(-u')$.
\end{prop}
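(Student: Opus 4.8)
The plan is to exploit the freedom to translate $L_A$ by a $K$-point $y$, which by Definition~\ref{def:translate.triple} and Proposition~\ref{prop:triple.for.translate} changes the triple $(L,\lambda,c)$ to $T_x^*(L,\lambda,c) = (T_z^*L, \lambda, c_{x'}\tensor c)$ for a suitable lift $x\in E(K)$, and to engineer $x$ so that the resulting $c$ satisfies $c(u') = \alpha(u')\,\beta$ where $\alpha$ is ``square-root-like'' relative to $t(u',\lambda(u'))$ and the symmetry $(-1)^*c = c$ holds. First I would recall from the discussion preceding the proposition that the Riemann theta function on $L$ has Fourier expansion~\eqref{eq:riemann.theta}, so $a_{\lambda(u')} = 1\tensor c(u')\tensor T_{\Phi(u')}^*a_0\tensor e_0$ (taking $u=0$); applying the norm and using multiplicativity of the metric under tensor product immediately gives
\[
\|a_{\lambda(u')}(x)\|_{L\tensor E_0} = \|c(u')\|_L\cdot\|a_0(x+\Phi(u'))\|_{L\tensor E_0}.
\]
So the content of the first displayed formula is exactly the assertion that one can choose the triple so that $\|c(u')\|_L = \|t(u',\lambda(u'))\|_P^{1/2}$ for all $u'\in M'$.

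The key algebraic input is the cocycle relation from Theorem~\ref{thm:descend.lbs}: $c(u_1'+u_2')\tensor c(u_1')\inv\tensor c(u_2')\inv = t(u_1',\lambda(u_2'))$. Passing to norms, $-\log\|c(\scdot)\|_L$ is a quadratic-type function on $M'$ whose associated symmetric bilinear form is $[\scdot,\lambda(\scdot)]$ — indeed, by the paragraph after Definition~\ref{def:trop.theta}, $u'\mapsto -\log\|c(u')\|_L - \tfrac12[u',\lambda(u')]$ is a homomorphism $M'\to\R$, say $\ell$. The freedom of changing the triple by translation by $x'\in E'(K)$ multiplies $c$ by the homomorphism $c_{x'}\colon M'\to L$ (Proposition~\ref{prop:invariant.lbs}), hence alters $\ell$ by $-\log\|c_{x'}(\scdot)\|$, which ranges over all of $\Hom(M',\R)$ as $x'$ ranges over $E'(K)$ — this uses that $E'(K)\to\Hom(M',\R)$ (via $\trop\circ c_{x'}$, using the lattice/nondegeneracy setup of~\parref{par:unif.dual}) is surjective, which follows since $\bT'^\an\subset E'^\an$ surjects onto $N_\R'$ under $\trop$. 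Choosing $x'$ to kill $\ell$ forces $-\log\|c(u')\|_L = \tfrac12[u',\lambda(u')]$, i.e. $\|c(u')\|_L = \|t(u',\lambda(u'))\|_P^{1/2}$, as desired. Finally one checks this $x'$ is $\td\phi_{L_A}(x)$ for some $x\in E(K)$, so that it genuinely corresponds to a translation $T_y^*L_A$ with $y=p(x)$: since $\phi_{L_A}$ is a \emph{principal} polarization, $\td\phi_{L_A}\colon E^\an\to E'^\an$ is an isomorphism, so $x$ exists and is unique.

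For the symmetry statement $((-1)^*L,\lambda,(-1)^*c) = (L,\lambda,c)$, I would argue that since $\phi_{L_A}$ is a principal polarization, $(-1)^*L_A\cong L_A$ (a principal polarization of an abelian variety is symmetric up to translation), so there is a $K$-point by which we may further translate to make $(-1)^*L_A = L_A$; concretely, $(-1)^*L_A\tensor L_A\inv$ lies in $A'(K)$ and, because $\phi_{L_A}$ is an isomorphism, is of the form $\phi_{L_A}(-2y_0)$ for a unique $y_0$, and replacing $L_A$ by $T_{y_0}^*L_A$ makes $(-1)^*L_A\cong L_A$ as \emph{polarizing} line bundles. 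Under Theorem~\ref{thm:descend.lbs}, negation acts compatibly on the whole Raynaud cross (it negates $M'$, $M$, $\bT$, $B$), carrying the triple $(L,\lambda,c)$ to $((-1)^*L,\lambda,(-1)^*c)$; since the correspondence is bijective, $(-1)^*L_A = L_A$ forces the two triples to be equivalent, i.e. to differ by $E_u,\epsilon_u$ for some $u\in M$. A final translation by a point of $E(K)$ mapping to $u$ — or rather, absorbing $\epsilon_u$, which is the homomorphism $t(\scdot,u)$ by Example~\ref{eg:linearize.Eu}/Proposition~\ref{prop:invariant.lbs} — and a matching adjustment by a $2$-torsion point kills this discrepancy; one must check that this last adjustment does not destroy the norm identity, which holds because it changes $c$ by a homomorphism of norm $1$ (a torsion point has trivial tropicalization). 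I expect the main obstacle to be this last coordination: simultaneously arranging the $L^2$-type normalization $\|c(u')\|_L = \|t(u',\lambda(u'))\|_P^{1/2}$ \emph{and} the exact equality of symmetric triples, i.e. checking that the translations used to achieve symmetry preserve the norm normalization (equivalently, that the ``symmetrizing'' point can be taken $2$-torsion, hence tropically trivial) and that the remaining ambiguity by $(E_u,\epsilon_u)$ is compatible with both.
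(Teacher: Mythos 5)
The first half of your argument is essentially the paper's: reduce to arranging $\|c(u')\|_L=\|t(u',\lambda(u'))\|_P^{1/2}$, note that $\ell(u')\coloneq-\log\|c(u')\|_L-\tfrac12[u',\lambda(u')]$ is a homomorphism, kill it by translating the triple by a point $x'\in E'(K)$, and lift $x'$ to $x\in E(K)$ using that $\td\phi_{L_A}$ is an isomorphism. One inaccuracy there: $\trop$ does \emph{not} map $E'(K)$ onto all of $\Hom(M',\R)$ -- the surjectivity of $\bT'^{\an}\to N'_\R$ concerns Berkovich points, while Proposition~\ref{prop:invariant.lbs} requires a $K$-point, and $\trop(E'(K))=\Hom(M',\Gamma)$ with $\Gamma=\val(K^\times)$. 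The argument survives because $\ell$ is $\Gamma$-valued (the values $c'(u')$ and $t(u',\lambda(u'))$ are $K$-rational), but you must say this; the paper does.

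The symmetry statement is where there is a genuine gap, and it is exactly the ``coordination'' you flag at the end without resolving. The paper's missing idea is an order-of-operations trick: \emph{after} the first normalization, $-\log\|c(u')\|_L=\tfrac12[u',\lambda(u')]$ is even in $u'$, so the trivialization $(-1)^*c\tensor c\inv$ of $(-1)^*L\tensor L\inv$ has norm identically $1$; hence the point $x'\in E'(K)$ corresponding to the pair $\big((-1)^*L\tensor L\inv,\,(-1)^*c\tensor c\inv\big)$ via Proposition~\ref{prop:invariant.lbs} satisfies $\trop(x')=0$. Choosing $x''$ with $2x''=x'$ (divisibility of $E'(K)$) and translating the triple by $x''$ makes it literally $(-1)$-invariant, since $L_{x''}^{\tensor 2}\cong(-1)^*L\tensor L\inv$ and $(-1)^*L_{x''}=L_{x''}\inv$, and it preserves the norm identity because $\|c_{x''}(u')\|=\|(-1)^*c(u')\tensor c(u')\inv\|^{1/2}=1$. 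Your proposed patches do not close this. Absorbing the residual $(E_u,\epsilon_u)$ ambiguity changes $\|c(u')\|_L$ by $\|\epsilon_u(u')\|=\|t(u',u)\|_P=\exp(-[u',u])$, which is \emph{not} $1$ and destroys the normalization unless $u=0$, by nondegeneracy of $[\scdot,\scdot]$. And the symmetrizing point cannot in general be taken $2$-torsion: what is actually needed (and what the evenness observation delivers) is that its tropicalization vanishes; ``torsion implies trivial tropicalization'' is true but irrelevant, since the point $x''$ above is a half of a nonzero point $x'$ and need not be torsion. So the first displayed identity is proved correctly modulo the $\Gamma$-valuedness remark, but the ``moreover'' clause is not established by your sketch.
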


\begin{proof}
  By Corollary~\ref{cor:fourier.invariant.norm}, for the first assertion we only
  need to show that there exist $y$ and $(L,\lambda,c)$ such that
  \begin{equation}\label{eq:c.vs.t}
    \|c(u')\|_L = \|t(u',\lambda(u'))\|_P^{1/2}
  \end{equation}
  for all $u'\in M'$.  Let $(L',\lambda,c')$ be a triple associated with $L_A$.
  Let $Q'(u') = -\log\|c'(u')\|_{L'}$ and let
  $\beta(u_1',u_2') = [u_1',\lambda(u_2')] = -\log\|t(u_1',\lambda(u_2'))\|_P$.
  Then $\beta\colon M'\times M'\to\R$ is a symmetric, positive-definite bilinear
  form by Theorem~\ref{thm:descend.lbs} (since $L_A$ is ample), and for
  $u_1',u_2'\in M'$ we have $\beta(u_1',u_2') = Q'(u_1'+u_2')-Q'(u_1')-Q'(u_2')$
  (also by Theorem~\ref{thm:descend.lbs}).  Letting $Q(u) = \frac 12 \beta(u,u)$ be
  the quadratic form associated to the bilinear form $\beta$, we also have
  $\beta(u_1',u_2') = Q(u_1'+u_2')-Q(u_1')-Q(u_2')$, so $Q-Q'\colon M'\to\R$ is a
  homomorphism.  Since $c(u')$ and $t(u',\lambda(u'))$ are $K$-valued points,
  the image of $Q-Q'$ is contained in $\Gamma\coloneq\val(K^\times)$.  Therefore
  we may regard $Q-Q'$ as a point in
  $N'_\Gamma\coloneq\Hom(M',\Gamma)\subset N'_\R$.

  Choose $x'\in E'(K)$ such that $\trop(x') = Q-Q'$.  Let $L_{x'}$ be the
  translation-invariant line bundle on $\beta$ and $c_{x'}\colon M'\to L_{x'}$ the
  homomorphism which correspond to $x'$ by Proposition~\ref{prop:invariant.lbs}.
  For $u'\in M'$ we have $c_{x'}(u') = e_{u'}(x')$, so
  \[ \angles{u',\trop(x')} = -\log\|c_{x'}(u')\|_{E_{u'}} \]
  by definition of $\trop$~\eqref{eq:trop.E}.  On the other hand,
  \[ \|c_{x'}(u')\|_{E_{u'}} = \|c_{x'}(u')\|_P = \|c_{x'}(u')\|_{L_{x'}} \]
  since any translation-invariant line bundle is a pullback of the Poincar\'e
  bundle as a formal line bundle.  It follows that
  $Q-Q' = -\log\|c_{x'}(\scdot)\|_{L_{x'}}\in N'_\Gamma$.

  Let $\td\phi_{L_A}\colon E^\an\isom E'^\an$ be the lift of $\phi_{L_A}$ to
  universal covers, let $x = \td\phi_{L_A}\inv(x')$, let $y = p(x)\in A(K)$, and
  let $z = q(x)\in B(K)$.  Then
  $(L,\lambda,c)\coloneq T_x^*(L',\lambda',c') = (T_z^*L',\lambda,c_{x'}\tensor
  c')$
  is a triple for $T_y^*L_A$ by Proposition~\ref{prop:triple.for.translate}, and
  by construction, $-\log\|c(u')\|_L = Q(u') = \frac 12 \beta(u',u')$.

  Now we treat the final assertion.  To begin we replace $L_A$ by $T_y^*L_A$ and
  $(L',\lambda,c')$ by $(L,\lambda,c)$. A triple for the translation-invariant
  line bundle $(-1)^*L_A\tensor L_A\inv$ is
  \[ \big((-1)^*L\tensor L\inv,\, 0,\, (-1)^*c\tensor c\inv \big). \]
  For $u'\in M'$ we have $\|c(u')\|_L = \|c(-u')\|_L$ by~\eqref{eq:c.vs.t}, so
  $\|(-1)^*c(u')\tensor c(u')\inv\|_{(-1)^*L\tensor L\inv} = 1$.  Let
  $x'\in E'(K)$ be the point giving rise to the pair
  $((-1)^*L\tensor L\inv,(-1)^*c\tensor c\inv)$ in the manner of
  Proposition~\ref{prop:invariant.lbs}, and choose $x''\in E'(K)$ such that
  $2x'' = x'$.  Then $L_{x''}^{\tensor 2}\cong(-1)^*L\tensor L\inv$ and
  $c_{x''}^{\tensor 2}\cong(-1)^*c\tensor c\inv$.  Since
  $(-1)^*L_{x''} = L_{x''}\inv$ as $L_{x''}$ is translation-invariant, we have
  \[ (-1)^*(L\tensor L_{x''}) \cong (-1)^*L\tensor L_{x''}\inv \cong L\tensor
  L_{x''}, \] and similarly for $c\tensor c_{x''}$.  Hence the triple
  \[ \big(L\tensor L_{x''},\, \lambda,\, c\tensor c_{x''} \big) =
  T_{x''}^*(L,\,\lambda,\,c) \]
  is invariant under the symmetry $(-1)$, and for $u'\in M'$,
  \[ \|c_{x''}(u)\|_{L_{x''}} =
  \sqrt{\|(-1)^*c(u')\tensor c(u')\inv\|_{(-1)^*L\tensor L\inv}} = 1. \]
  It follows that $T_{x''}^*(L,\lambda,c)$ satisfies all of the hypotheses of
  the Proposition.
\end{proof}

\begin{eg}\label{eg:toric.4}
  Suppose that $A$ has toric reduction, as in Examples~\ref{eg:toric.1},
  \ref{eg:toric.2}, \ref{eg:toric.2bis}, and~\ref{eg:toric.3}.  Let $L_A$ be a
  line bundle on $A$, with associated cocycle
  $u'\mapsto c(u')\chi^{\lambda(u')}$.  Let $\lambda_\bT\colon\bT\to\bT'$ be the
  homomorphism inducing $\lambda\colon M'\to M$ on character groups.
  By~\eqref{eq:c.t.relation.toric} the bilinear form
  $(u_1',u_2')\mapsto t(u_1',\lambda(u_2')) = \chi^{\lambda(u_2')}(u_1')$ is
  symmetric.  Hence for $u_1',u_2'\in M'\times M'$, we have
  \begin{equation}\label{eq:compat.lambdas}
    \chi^{u_2'}(\lambda_\bT(u_1')) = \chi^{\lambda(u_2')}(u_1')
    = t(u_1',\lambda(u_2')) = t(u_2',\lambda(u_1'))
    = \chi^{u_2'}(\lambda(u_1')),
  \end{equation}
  where the first equality is the definition of $\lambda_\bT$ and the last is
  the definition of the embedding $M\inject\bT'$ in Example~\ref{eg:toric.1}.
  Hence $\lambda$ is the restriction of $\lambda_\bT$ to $M'$, as in
  Proposition~\ref{prop:symmetric.phiL}. 

  Suppose that $\phi_{L_A}$ is a principal polarization.  Choose a bilinear form
  $\sqrt{t(\scdot,\lambda(\scdot))}\colon M'\times M'\to K^\times$ whose square
  is $t(\scdot,\lambda(\scdot))$ (choose square roots of the images of pairs of
  basis elements).  Then
  \[ f = \sum_{u'\in M'} \sqrt{t(u',\lambda(u'))}\,\chi^{\lambda(u')} \]
  is the Riemann theta function associated to the triple $(\sO,\lambda,c)$ for
  $c(u') = \sqrt{t(u',\lambda(u'))}$. See also~\cite[Definitions~16,19]{teitelbaum88:genus_two}.
\end{eg}

%%%%%%%%%%%%%%%%%%%%%%%%%%%%%%%%%%%

\vskip 1cm

\section{Tropicalization of theta functions}
\label{sec:tropicalization.theta}

An abelian variety $A$ over $K$ has a canonical skeleton $\Sigma$, which is a
tropical abelian variety in the sense of Definition~\ref{def:trop.av}.  In this
section we define the tropicalization $f_{\trop}$ of a theta function $f$ on
$A$, and we prove that $f_{\trop}$ is a tropical theta function on $\Sigma$.  We
show that a Riemann theta function tropicalizes to the tropical Riemann theta
function in the principally polarized case, up to translation and scaling.  

Throughout this section we use the notation~\parref{par:unif.dual} for the
uniformization of $A$ and its dual.

\subsection{The skeleton}\label{par:skeleton}
There is a canonical continuous section $\sigma\colon N_\R\to E^\an$ of the
tropicalization map $\trop\colon E^\an\to N_\R$ defined in~\eqref{eq:trop.E},
which is constructed in~\cite[Example~7.2]{gubler10:canonical_measures} as
follows.  Choose a local section $V\to E^\an$ of the formal Raynaud
extension~\eqref{eq:raynaud.formal} as in Remark~\ref{rem:formal.raynaud}, so
$q\inv(V)\cong\bT\times V$.  Then $\trop\inv(v)\cong U_v\times V$, where $U_v$
is the inverse image of $v$ under $\trop\colon\bT^\an\to N_\R$.  Any analytic
function $h$ on $U_v\times V$ has a unique Laurent series expansion
\[ h = \sum_{u\in M} a_u\chi^u \sptxt{such that}
|a_u|_{\sup}\,\exp(-\angles{u,v})\To 0 \]
on the complements of finite sets of $M$, where $a_u\in\sO(V)$ and
$|\scdot|_{\sup}$ is the supremum norm on $\sO(V)$.  We define
$\sigma(v)$ to be the norm on $\sO(U_v\times V)$ given by
\begin{equation}\label{eq:sigma.v}
  \big|h(\sigma(v))\big| = \max_{u\in M}\big\{ |a_u|_{\sup}\,\exp(-\angles{u,v})\big\}.
\end{equation}
This is independent of the choice of $V$ and the choice of splitting (of the
\emph{formal} Raynaud extension), and the resulting section
$\sigma\colon N_\R\to E^\an$ is continuous.  

\begin{lem}\label{lem:skel.to.gauss}
  Let $\fB$ be the smooth abelian $R$-scheme with generic fiber $B$.
  There is a unique point $\xi\in B^\an$ whose reduction is the generic point of
  $\fB_s$, and $q(\sigma(v)) = \xi$ for all $v\in N_\R$.
\end{lem}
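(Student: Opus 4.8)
The plan is to analyze $q(\sigma(v))$ using the local description of $\sigma$ and the model metric, and to identify the resulting point of $B^\an$ via the good reduction of $B$. First I would recall that by the valuative criterion, there is indeed a unique point $\xi \in B^\an$ whose reduction is the generic point $\eta$ of $\fB_s$: this is the unique ``Gauss point'' coming from the smooth integral model $\fB$; concretely, for any formal affine open $\Spf \fA \subset \fB$ with generic fiber $V$, the point $\xi$ restricts to the supremum seminorm (Gauss point) on $\sO(V)$ associated with the $R$-integral structure $\fA$. Uniqueness is clear since a point specializing to $\eta$ must have seminorm bounded by the supremum norm on every such $V$, and a point of $B^\an$ lying in the closure of $\eta$ under reduction is forced to be this Gauss point. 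So the content of the lemma is the equality $q(\sigma(v)) = \xi$.

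Next I would unwind the construction of $\sigma(v)$ from~\eqref{eq:sigma.v}. Fix $v \in N_\R$ and choose a formal affinoid $V \subset B^\an$, arising as the generic fiber of a formal affine open of $\fB$, over which the formal Raynaud extension~\eqref{eq:raynaud.formal} splits, giving $q\inv(V) \cong \bT \times V$ and hence $\trop\inv(v) \cong U_v \times V$, where $U_v \subset \bT^\an$ is the fiber of $\trop\colon\bT^\an\to N_\R$ over $v$. Under this identification $q$ is the second projection. The point $\sigma(v) \in U_v \times V$ is the seminorm on $\sO(U_v\times V)$ sending $h = \sum_u a_u\chi^u$ (with $a_u \in \sO(V)$) to $\max_u |a_u|_{\sup}\exp(-\angles{u,v})$. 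To compute $q(\sigma(v))$, I restrict this seminorm to functions pulled back from $V$, i.e.\ those $h$ with $h = a_0 \in \sO(V)$ and all other $a_u = 0$. On such an $h$ we get $|h(\sigma(v))| = |a_0|_{\sup}$, which is exactly the supremum seminorm on $\sO(V)$ defined by the integral structure $\fA$ — that is, the Gauss point. Hence $q(\sigma(v))|_V = \xi|_V$, and since this holds for a covering family of such $V$ (and $q(\sigma(v))$, $\xi$ both lie in $V$ — the latter by definition, the former because $\sigma(v)$ lies over $V$ in the chosen trivialization), we conclude $q(\sigma(v)) = \xi$. Independence of $v$ is now transparent: the factor $\exp(-\angles{u,v})$ only enters through the coefficients $a_u$ with $u \neq 0$, which are killed upon restricting along $q$.

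The main obstacle I anticipate is bookkeeping around the passage between the two trivializations and making sure the supremum norm appearing in~\eqref{eq:sigma.v} really is the one attached to the smooth model $\fB$ rather than some other integral structure: this requires invoking Remark~\ref{rem:formal.raynaud}, which says the splitting $q\inv(V) \cong \bT\times V$ comes from a splitting of formal group schemes over $R$, so that $\sO(V)$ in the formula is the Tate algebra of a formal affine piece of $\fB$ and $|\cdot|_{\sup}$ is the associated Gauss/spectral norm — precisely the one whose reduction is $\eta$. A secondary point is to check that $\sigma(v)$ really does lie in the preimage $\trop\inv(v)$ of the chosen $V$-chart so that the restriction argument is legitimate; but this is immediate from the construction of $\sigma$ in~\cite[Example~7.2]{gubler10:canonical_measures}, together with the fact that $q$ maps $\trop\inv(v)$ into $B^\an$ compatibly with the local product structure. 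Once these identifications are in place, the proof is essentially the one-line observation that projecting the seminorm~\eqref{eq:sigma.v} to $\sO(V)$ discards the $\chi^u$-dependence and returns the spectral norm.
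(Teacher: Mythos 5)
Your proposal is correct and follows essentially the same route as the paper: identify $\xi$ as the Gauss/supremum point attached to the smooth model (the paper cites Berkovich, Proposition~2.4.4(ii) for existence and uniqueness), then observe that for $h=a_0$ pulled back from $V$ the formula~\eqref{eq:sigma.v} returns $|a_0|_{\sup}$, so $q(\sigma(v))=\xi$. Your extra care about which integral structure defines $|\cdot|_{\sup}$ is exactly the content of Remark~\ref{rem:formal.raynaud} and is consistent with the paper's argument.
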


\begin{proof}
  The first assertion is a standard application
  of~\cite[Proposition~2.4.4(ii)]{berkovic90:analytic_geometry}: the point $\xi$
  corresponds to the supremum norm on $\sO(V)$ for any $V$ as
  in~\parref{par:skeleton}.  If $h = a_0$ is the pullback of a function on $V$
  then by definition $|h(\sigma(v))| = |a_0|_{\sup}$, so $q(\sigma(v)) = \xi$.
\end{proof}

\begin{lem}\label{lem:translate.skel}
  Let $v \in N_\R$ and $x\in E(K)$.  Then $\sigma(v) + x \coloneq T_x(\sigma(v))$ is
  equal to $\sigma(v + \trop(x))$.
\end{lem}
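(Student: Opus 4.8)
The plan is to reduce the claim to the analogous, easier statement on the torus $\bT^\an$, using the local splitting of the formal Raynaud extension and the fact that translation by $x\in E(K)$ interacts well with the Fourier/Laurent coefficients. First I would fix $v\in N_\R$ and a local section $V\to E^\an$ of the formal Raynaud extension as in Remark~\ref{rem:formal.raynaud}, so that $q\inv(V)\cong\bT\times V$ and $\trop\inv(v)\cong U_v\times V$. Since $x\in E(K)$ translates $\sigma(v)$ by a $K$-point, $T_x$ carries $\trop\inv(v)$ to $\trop\inv(v+\trop(x))$ because $\trop$ is a homomorphism and $\trop(\sigma(v))=v$. So both $\sigma(v)+x$ and $\sigma(v+\trop(x))$ are points of $\trop\inv(v+\trop(x))\cong U_{v+\trop(x)}\times V'$ for a suitable local section, and it suffices to compare the norms they define on analytic functions.

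Next I would compute the norm $|h(\sigma(v)+x)|$ for an analytic function $h$ near $\sigma(v+\trop(x))$. By definition $|h(\sigma(v)+x)| = |(T_x^*h)(\sigma(v))|$, and $T_x^*h$ is an analytic function near $\sigma(v)$, so~\eqref{eq:sigma.v} applies: writing $T_x^*h = \sum_{u\in M} b_u\chi^u$ with $b_u\in\sO(V)$, we get $|h(\sigma(v)+x)| = \max_{u}\{|b_u|_{\sup}\exp(-\angles{u,v})\}$. On the other hand, writing $h=\sum_u a_u\chi^u$ in the coordinates adapted to $v+\trop(x)$, the claim is $|h(\sigma(v+\trop(x)))| = \max_u\{|a_u|_{\sup}\exp(-\angles{u,v+\trop(x)})\}$. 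So the heart of the matter is the relation between the Laurent coefficients $b_u$ of $T_x^*h$ and $a_u$ of $h$: translation by $x$ in the torus factor multiplies the $u$-th coefficient by $\chi^u(x_\bT)$ (where $x_\bT$ is the torus-component of $x$ in the splitting), together with a translation on $V$ by $q(x)\in B(K)$ which has good reduction and hence fixes the point $\xi\in B^\an$ of Lemma~\ref{lem:skel.to.gauss}, so it preserves the supremum norm: $|b_u|_{\sup} = |a_u|_{\sup}$. Since $\angles{u,\trop(x)} = -\log|\chi^u(x_\bT)|$ by~\eqref{eq:trop.E} (restricted to the torus), the two maxima agree, which gives $|h(\sigma(v)+x)| = |h(\sigma(v+\trop(x)))|$ for all $h$, hence $\sigma(v)+x = \sigma(v+\trop(x))$.

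The main obstacle I expect is the bookkeeping in matching up the local splittings for $v$ and for $v+\trop(x)$: a priori the translate $T_x$ does not respect a fixed choice of local section $V\to E^\an$, since $x$ has a nontrivial component along $B$. The clean way around this is to use that the construction of $\sigma$ in~\parref{par:skeleton} is \emph{independent} of the choice of $V$ and of the splitting of the formal Raynaud extension; so I can choose the splitting near $\sigma(v+\trop(x))$ to be the $T_x$-translate of the one near $\sigma(v)$, which makes the coordinate comparison above literally the statement that $T_x^*$ sends the Laurent expansion of $h$ to that of $T_x^*h$. Once that is set up, the only substantive input is the invariance of the supremum norm on $\sO(V)$ under translation by a $K$-point of $B$ with good reduction — equivalently, that such a translation fixes $\xi$, which follows since it extends to an automorphism of the abelian $R$-scheme $\fB$ and hence fixes the generic point of $\fB_s$.
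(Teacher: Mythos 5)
Your argument is correct and is essentially the paper's own proof: both reduce to the Laurent expansion on a chart $\bT^\an\times V$, use the quasi-periodicity $T_x^*\chi^u=\chi^u(x)\,\chi^u$ together with $|\chi^u(x)|=\exp(-\angles{u,\trop(x)})$, and use that translation by $q(x)$ fixes $\xi$, so the sup norms of the coefficients are unchanged (a point the paper leaves implicit). The only blemish is notational: the displayed equality $|b_u|_{\sup}=|a_u|_{\sup}$ should read $|T_{q(x)}^*a_u|_{\sup}=|a_u|_{\sup}$, since $b_u$ also carries the factor $\chi^u(x_\bT)$ that you then correctly account for when matching the two maxima.
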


\begin{proof}
  Let $w = \trop(x)$, choose $V$ as in~\parref{par:skeleton}, and let $h$ be an
  analytic function on $\bT^\an\times V$.  We can write
  $h = \sum_{u\in M} a_u\chi^u$ for $a_u\in\sO(V)$, such that
  $|a_u|_{\sup}\exp(-\angles{v',u})\to 0$ for all $v'\in N_\R$.  We have
  \[\begin{split}
      |h(T_x(\sigma(v)))| 
      &= \left|\sum a_u T_x^*(\chi^u)(\sigma(v))\right| \\
      &= \max\big\{ |a_u|_{\sup}\,\exp(-\angles{u,v}-\angles{u,w}) \big\} \\
      &= |h(\sigma(v+w))|
\end{split}\]
  because $T_x^*(\chi^u) = \chi^u(x)\chi^u$ and
  $|\chi^u(x)| = \exp(-\angles{u,\trop(x)})$ by definition of $\trop$.
\end{proof}

\begin{defn}\label{def:skel.av}
  The \emph{skeleton} of $A$ is the real torus $N_\R/\trop(M')$.
\end{defn}

Note that the pairing $[\scdot,\scdot]\colon M'\times M\to\R$
of~\eqref{eq:pairing.1} defines the embedding $\trop\colon M'\to N_\R$ used in
Definition~\ref{def:skel.av}.
The tropicalization map $\trop\colon E^\an\to N_\R$ defines by passage to the
quotient a proper, surjective homomorphism $\tau\colon A^\an\to\Sigma$, and the
section $\sigma$ restricts fiberwise to a section $\sigma\colon\Sigma\to A^\an$.
The composition $\sigma\circ\tau\colon A^\an\to A^\an$ is the image of a
deformation retraction onto $\sigma(\Sigma)$
by~\cite[\S6.5]{berkovic90:analytic_geometry}.

\begin{eg}\label{eg:toric.5}
  Suppose that $A$ has toric reduction, as in Examples~\ref{eg:toric.1},
  \ref{eg:toric.2}, \ref{eg:toric.2bis}, \ref{eg:toric.3}, and~\ref{eg:toric.4}.  Then
  $E^\an=\bT^\an$, and for $v\in N_\R$ the point $\sigma(v)$ is defined by
  \[ |f(\sigma(v))| = \sup_{u\in M} |a_u|\,\exp(-\angles{u,v}) \sptxt{for}
  f = \sum_{u\in M} a_u\,\chi^u. \]
  In this case $a_u\in K$ are constants.  The map $\sigma\colon N_\R\to\bT^\an$
  is the usual skeleton of a torus.
\end{eg}

\subsection{Skeletons and polarizations}\label{par:skel.polarization}
Suppose now that $A$ is endowed with a polarization $\phi\colon A\to A'$
associated to an ample line bundle $L_A$.  By~\parref{par:polarizations}, $L_A$
gives rise to a triple $(L,\lambda,c)$ such that the real-valued pairing
\[ [\scdot,\,\lambda(\scdot)]\colon~ M'\times M'\To \R \]
is bilinear, symmetric, and positive-definite.  The dual abelian variety $A'$
has skeleton $\Sigma' \coloneq N_\R'/\trop(M)$, where $N_\R' = \Hom(M',\R)$ and
$\trop\colon M\to N_\R'$ is again defined by $[\scdot,\scdot]$.
From now on we identify $M'$ and $M$ with their images in $N_\R$ and $N_\R'$,
respectively.  
\begin{prop}\label{prop:skel.polarization}
  The pairing $[\scdot,\scdot]\colon M'\times M\to\R$ and the homomorphism
  $\lambda\colon M'\to M$ make $\Sigma = N_\R/M'$ into a tropical abelian
  variety in the sense of Definition~\ref{def:trop.av}, with dual
  $\Sigma' = N_\R'/M$.  Moreover, the square
  \[\xymatrix @R=.2in{
    {A^\an} \ar[r]^{\phi} \ar[d]_{\tau} &
    {A'^\an} \ar[d]^{\tau} \\
    {\Sigma} \ar[r]_{\phi_{\trop}} & {\Sigma'} }\]
  is commutative, where $\phi_{\trop}$ is the polarization defined by $\lambda$.
\end{prop}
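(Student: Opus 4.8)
The plan is to verify the two assertions separately, leaning on the structural results already assembled. For the first assertion, that $(\Sigma = N_\R/M',\; [\scdot,\scdot],\;\lambda)$ is a tropical abelian variety, I would simply unwind Definition~\ref{def:trop.av}: the data (a) asks for free abelian groups $M,M'$ of the same rank, which holds since $\lambda\colon M'\to M$ has finite cokernel (the form $[\scdot,\lambda(\scdot)]$ is positive-definite, hence $\lambda$ is injective, and by Theorem~\ref{thm:descend.lbs} and Proposition~\ref{prop:symmetric.phiL} these groups have the same rank); data (b) asks for a nondegenerate pairing $[\scdot,\scdot]\colon M'\times M\to\R$, which is exactly the pairing~\eqref{eq:pairing.1}, nondegenerate by~\cite[Proposition~3.4]{bosch_lutkeboh91:degenerat_abelian_varieties}. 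The remaining hypothesis---existence of $\lambda\colon M'\to M$ with $[\scdot,\lambda(\scdot)]$ symmetric and positive-definite---is precisely the content of~\parref{par:skel.polarization}, which is where we started: $L_A$ being ample forces $(u_1',u_2')\mapsto[u_1',\lambda(u_2')]$ to be symmetric and positive-definite by Theorem~\ref{thm:descend.lbs}. So $\Sigma$ is a tropical abelian variety, and by the same token (applying the construction to $A'$, whose uniformization data swaps the roles of $M$ and $M'$) the dual is $\Sigma' = N_\R'/M$, matching Definition~\ref{def:trop.av}(2).

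For the commutativity of the square, the idea is to reduce the statement on $A^\an$ to a statement on the universal cover $E^\an$, where everything is computed by the tropicalization map $\trop$. Concretely, the lift $\td\phi_{L_A}\colon E^\an\to E'^\an$ of $\phi$ restricts on $M'$ to $\lambda$ (Proposition~\ref{prop:symmetric.phiL}(2)) and carries $\bT^\an$ to $\bT'^\an$, inducing $\lambda_\bT\colon\bT\to\bT'$ whose character map is $\lambda$ (Proposition~\ref{prop:symmetric.phiL}(1)). So the first thing I would establish is the $E$-level identity $\trop_{E'}\circ\td\phi_{L_A} = \phi_\trop^{\mathrm{lin}}\circ\trop_E$, where $\phi_\trop^{\mathrm{lin}}\colon N_\R\to N_\R'$ is the linear map dual to $\lambda$: both sides are homomorphisms $E^\an\to N_\R'$, so by the description of $\trop$ via the model metrics on the $E_u$ (equations~\eqref{eq:trop.E} and Remark~\ref{rem:formal.raynaud}) it suffices to check the identity after pairing with each $u'\in M'$, where it reduces to $-\log\|e_{u'}(\td\phi_{L_A}(x))\|_{E'_{u'}} = -\log\|e_{\lambda(u')\text{-part}}(x)\|$ --- an equality that follows from $q'\circ\td\phi_{L_A} = \phi_L\circ q$ (Proposition~\ref{prop:symmetric.phiL}, also used in the proof of Proposition~\ref{prop:triple.for.translate}) together with the compatibility of model metrics under the pushout~\eqref{pushout}. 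Passing this identity to the quotients by $M'$ and $M$ --- legitimate because $\td\phi_{L_A}(M')\subset M$ maps to $\lambda(M')\subset M$ under $\trop$, and $\phi_\trop^{\mathrm{lin}}$ descends to $\phi_\trop\colon\Sigma\to\Sigma'$ by the very definition of the polarization in Definition~\ref{def:trop.av}(3) --- yields exactly the commutativity $\tau'\circ\phi = \phi_\trop\circ\tau$.

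The main obstacle I anticipate is the bookkeeping in the $E$-level identity: matching $\trop_{E'}\circ\td\phi_{L_A}$ against $\phi_\trop^{\mathrm{lin}}\circ\trop_E$ requires keeping careful track of which extension ($E_u$ on the $A$ side versus $E'_{u'}$ on the $A'$ side) computes which pairing, since the roles of $M$ and $M'$ are interchanged between $A$ and $A'$, and the metrics $\|\scdot\|_{E_u}$ are defined via the good-reduction models of $B$ and $B'$ and the Poincar\'e bundle $P\to B\times B'$. The key computational input is that $\lambda_B = \phi_L$ (Proposition~\ref{prop:symmetric.phiL}) together with the universal identity $(\Id\times\phi_L)^*P = \sD_2 L$, which lets one identify the pullback metric $\td\phi_{L_A}^*\|\scdot\|_{E'_{u'}}$ with a metric expressible in terms of the $E_u$'s; once that identification is in hand, pairing with $u'$ gives $\langle u',\trop_{E'}(\td\phi(x))\rangle = \langle u', \phi_\trop^{\mathrm{lin}}(\trop_E(x))\rangle = \langle \lambda(u'), \trop_E(x)\rangle$ by the definition of the dual map, and we are done. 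Everything else --- surjectivity of $\tau$, properness, and the descent to quotients --- is formal.
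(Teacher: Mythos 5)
Your proposal is correct and follows essentially the same route as the paper: lift $\phi$ to $\td\phi_{L_A}$ on the universal covers, use Proposition~\ref{prop:symmetric.phiL} to see it restricts to $\lambda$ on $M'$ and is compatible with $\trop$ (so the linear map dual to $\lambda$ descends to $\phi_{\trop}$), and check the pairing compatibility $\angles{u_1',\lambda(u_2')}=[u_1',\lambda(u_2')]$, with the first assertion coming from the data already assembled in \S\ref{par:skel.polarization}, nondegeneracy from~\eqref{eq:pairing.1}, and the full-rank lattice condition. The paper's proof is terser---it simply asserts the $E$-level commutativity that you propose to verify via the model metrics on the $E_u$---but the substance is the same.
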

\begin{proof}
The homomorphism $\lambda\colon M'\to M$ gives rise to a map $\td\phi_{\trop}\colon N_\R\to N_\R'$ making the square
\[\xymatrix @R=.2in{
  {E^\an} \ar[r]^{\td\phi} \ar[d]_{\trop} &
  {E'^\an} \ar[d]^{\trop} \\
  {N_\R} \ar[r]_{\td\phi_{\trop}} & {N_\R'} }\]
commutative.  Since $\td\phi$ restricts to $\lambda\colon M'\to M$ on
sublattices by Proposition~\ref{prop:symmetric.phiL}, $\td\phi_{\trop}$ also
restricts to $\lambda$ on $M'\subset N_\R$, and hence descends to a homomorphism
$\phi_{\trop}\colon\Sigma\to\Sigma'$.  Unwinding the definitions, for
$u_1',u_2'\in M'$ we have
\[ \angles{u_1',\,\lambda(u_2')} = -\log\|t(u_1',\,\lambda(u_2'))\|_P 
= [u_1',\,\lambda(u_2')], \]
(the first being the evaluation pairing $M'\times N'_\R\to\R$),
which is analogous to~\eqref{eq:compat.lambdas}. 
\end{proof}

See also~\cite[(3.7) and~\S4]{baker_rabinoff14:skeleton_jacobian} for a
discussion in the principally polarized case.

\subsection{Tropicalization of theta functions}\label{par:tropicalization.theta}
Fix a rigidified line bundle $L_A$ on $A$, let $(L,\lambda,c)$ be a corresponding
triple of Theorem~\ref{thm:descend.lbs}, and let $\phi_{L_A}\colon A\to A'$
be the induced homomorphism.  Let $f\in H^0(E^\an,q^*L)$ be a nonzero theta
function (Definition~\ref{defn:theta.function}).

\begin{defn}\label{def:tropicalize.theta}
  The \emph{tropicalization} of $f$ is the function
  \[ f_{\trop}\colon N_\R\to\R \sptxt{defined by}
  f_{\trop}(v) = -\log\|f(\sigma(v))\|_{q^*L}.
  \]
\end{defn}

Here $\|\scdot\|_{q^*L}$ is the pullback of the canonical model metric
$\|\scdot\|_L$ of~\parref{par:model.metrics}.  The next proposition shows that $\|f(\sigma(v))\|_{q^*L}\neq 0$ for all
$v\in N_\R$.

\begin{thm}[The tropicalization of a theta function is a tropical theta function]\label{thm:trop.theta}
  Let $f\in H^0(E^\an,q^*L)$ be a nonzero theta function with tropicalization
  $f_{\trop}\colon N_\R\to\R$ and Fourier expansion
  $f = \sum_{u\in M} a_u\tensor e_u$.  Let $\xi\in B^\an$ be the point defined in
  Lemma~\ref{lem:skel.to.gauss}. 
  \begin{enumerate}
  \item $\displaystyle f_{\trop}(v) 
    = \min_{u\in M} \big\{-\log\|a_u(\xi)\|_{L\tensor E_u} 
    + \angles{u,v}\big\} < \infty$
    for all $v\in N_\R$.
  \item $f_{\trop}$ is a piecewise integral affine function.
  \item Suppose that $L_A$ is ample, so that $\phi_{L_A}$ is a polarization.
    Let $c_{\trop}(u') = -\log\|c(u')\|_L$.  Then
    \[ c_{\trop}(u_1'+u_2')-c_{\trop}(u_1')-c_{\trop}(u_2') =
    [u_1',\,\lambda(u_2')] \sptxt{for all $u_1',u_2'\in M'$,} \]
    and for $u'\in M'$ and $v\in N_\R$ we have
    \begin{equation}\label{eq:trop.theta.trans.law}
      f_{\trop}(v) = f_{\trop}(v + u') + c_{\trop}(u') + \angles{\lambda(u'),v}.
    \end{equation}
  \end{enumerate}
  In particular, $f_{\trop}$ is a tropical theta function in the sense of
  Definition~\ref{def:trop.riemann.theta}. 
\end{thm}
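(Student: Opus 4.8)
The plan is to prove the three assertions in sequence, with (1) doing most of the work and (2) and (3) following with relatively little additional effort.

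\medskip

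\noindent\textbf{Step 1: the formula for $f_{\trop}$.} The starting point is the definition $f_{\trop}(v) = -\log\|f(\sigma(v))\|_{q^*L}$ together with the local description of the section $\sigma$ from~\parref{par:skeleton}. Fix $v\in N_\R$, choose a formal affinoid $V\subset B^\an$ as in Remark~\ref{rem:formal.atlas} over which the formal Raynaud extension splits and $\sL$ (equivalently $L$) is trivialized, so that $q\inv(V)\cong\bT\times V$ and $f|_V$ has the Laurent expansion $f = \sum_u a_u\chi^u$ with $a_u\in\sO(V)$ the restrictions of the Fourier coefficients. By~\eqref{eq:fourier.norm} the model metric satisfies $\|f(x)\|_{q^*\sL} = |\sum_u a_u\chi^u(x)|$ on $q\inv(V)$, and by the definition~\eqref{eq:sigma.v} of $\sigma(v)$, evaluation at $\sigma(v)$ replaces this absolute value by the maximum $\max_u\{|a_u|_{\sup}\exp(-\angles{u,v})\}$. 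By Lemma~\ref{lem:skel.to.gauss} we have $q(\sigma(v)) = \xi$, and $|a_u|_{\sup}$ is exactly $\|a_u(\xi)\|_{L\tensor E_u}$ up to the constant-norm trivialization, so passing to $-\log$ gives the claimed minimum $\min_u\{-\log\|a_u(\xi)\|_{L\tensor E_u} + \angles{u,v}\}$. The finiteness ``$<\infty$'' amounts to showing the minimum is attained, i.e.\ that not all $a_u$ vanish (which holds since $f\neq 0$) and that the quantities $-\log\|a_u(\xi)\|_{L\tensor E_u} + \angles{u,v}$ go to $+\infty$ off finite subsets of $M$; this last point is precisely the convergence condition built into the completed direct sum / the Laurent expansion, restated in the defining condition for $\sigma(v)$.

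\medskip

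\noindent\textbf{Step 2: piecewise integral affine.} Write $b_u \coloneq -\log\|a_u(\xi)\|_{L\tensor E_u}\in\R\cup\{+\infty\}$, so $f_{\trop}(v) = \min_{u\in M}\{b_u + \angles{u,v}\}$, the ``tropical polynomial'' (Legendre-type) associated to the function $u\mapsto b_u$ on $M$. Each affine piece $v\mapsto b_u + \angles{u,v}$ is integral affine since $u\in M = \Z^n$. The standard fact is that such an infimum of integral affine functions indexed by $M$, subject to the growth condition from Step~1, is finite everywhere, continuous, and locally a finite minimum — hence piecewise integral affine on full-dimensional polyhedra (the domains of linearity are the ``tropical Newton regions''). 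The growth/convergence condition guarantees that on any bounded region only finitely many indices $u$ can achieve the minimum, which is what makes the decomposition into polyhedra locally finite. This is essentially the tropical analogue of the fact that $-\log$ of the norm of a convergent Laurent series on an affinoid torus is piecewise linear; I would cite this structure and give a short direct argument rather than invoking an external reference.

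\medskip

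\noindent\textbf{Step 3: the transformation law.} Here I use the invariance of $f$ under the $M'$-linearization. By Proposition~\ref{prop:invariant.section}(2), $f = c(u')\tensor e_{\lambda(u')}\tensor T_{u'}^*f$ for all $u'\in M'$. Applying $-\log\|{\scdot}(\sigma(v))\|_{q^*L}$ to both sides, the factor $c(u')$ contributes $c_{\trop}(u') = -\log\|c(u')\|_L$ (a constant in $v$), the factor $e_{\lambda(u')}$ contributes $-\log\|e_{\lambda(u')}(\sigma(v))\|_{E_{\lambda(u')}} = \angles{\lambda(u'),\trop(\sigma(v))} = \angles{\lambda(u'),v}$ by the definition~\eqref{eq:trop.E} of $\trop$ on $E^\an$ and the fact that $\sigma$ is a section of $\trop$, and the factor $T_{u'}^*f$ contributes $-\log\|f(\sigma(v)+u')\|_{q^*L}$, which by Lemma~\ref{lem:translate.skel} equals $-\log\|f(\sigma(v+\trop(u')))\|_{q^*L} = f_{\trop}(v+u')$ (recall $M'$ is identified with $\trop(M')\subset N_\R$). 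Combining gives exactly $f_{\trop}(v) = f_{\trop}(v+u') + c_{\trop}(u') + \angles{\lambda(u'),v}$. The cocycle identity $c_{\trop}(u_1'+u_2') - c_{\trop}(u_1') - c_{\trop}(u_2') = [u_1',\lambda(u_2')]$ follows by taking $-\log\|{\scdot}\|$ of the relation $c(u_1'+u_2')\tensor c(u_1')\inv\tensor c(u_2')\inv = t(u_1',\lambda(u_2'))$ from Theorem~\ref{thm:descend.lbs} together with~\eqref{eq:pairing.1}. The hypothesis that $L_A$ is ample enters only through Theorem~\ref{thm:descend.lbs} to guarantee $[\scdot,\lambda(\scdot)]$ is positive-definite, so that $(\lambda,c_{\trop})$ is genuinely the data of a tropical theta function in the sense of Definition~\ref{def:trop.theta}; with (1)--(3) in hand the final sentence is immediate.

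\medskip

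\noindent\textbf{Main obstacle.} The only real subtlety is in Step~1/Step~2: carefully justifying that the model metric $\|{\scdot}\|_{q^*L}$ evaluated at $\sigma(v)$ genuinely computes the tropical maximum, i.e.\ correctly matching the local trivializations of $L$ over $V$ (so that $|a_u|_{\sup} = \|a_u(\xi)\|_{L\tensor E_u}$ up to a unit of norm $1$), and independence of the choice of $V$ and splitting — this is where the formal-model bookkeeping of~\parref{par:model.metrics} and Remark~\ref{rem:formal.atlas} must be handled with care, paralleling Corollary~\ref{cor:fourier.invariant.norm}. Everything downstream of that identification is formal manipulation of $-\log$ of norms.
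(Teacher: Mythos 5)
Your proposal is correct and follows essentially the same route as the paper's proof: the local splitting of the formal Raynaud extension together with \eqref{eq:fourier.norm}, \eqref{eq:sigma.v}, and Lemma~\ref{lem:skel.to.gauss} for parts (1)--(2), and Proposition~\ref{prop:invariant.section}, the definition \eqref{eq:trop.E} of $\trop$, Lemma~\ref{lem:translate.skel}, and $-\log$ of the cocycle identity in Theorem~\ref{thm:descend.lbs} for part (3). The metric bookkeeping you flag as the main obstacle is exactly what the paper handles via the model-metric discussion and Corollary~\ref{cor:fourier.invariant.norm}, so no gap remains.
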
 

\begin{proof}
  Choose a local splitting $q\inv(V)\cong\bT\times V$ of the formal Raynaud
  extension~\eqref{eq:raynaud.formal} which also trivializes $L$, as in
  Remark~\ref{rem:formal.raynaud}.  Note that $\Sigma\subset V$ by
  Lemma~\ref{lem:skel.to.gauss}.  On $q\inv(V)$ we can write
  \[ f = \sum_{u\in M} a_u\chi^u \sptxt{where} a_u\in\sO(V), \]
  and some $a_u$ is nonzero.  We have
  \[ \|f\|_{q^*L} = \left|\sum a_u\chi^u(\sigma(v))\right| = \max_{u\in M}\big\{
  |a_u|_{\sup}\,\exp(-\angles{u,v})\big\} = \max_{u\in M}\big\{
  |a_u(\xi)|\,\exp(-\angles{u,v})\big\}.
  \]
  by~\eqref{eq:fourier.norm}, \eqref{eq:sigma.v}, and
  Lemma~\ref{lem:skel.to.gauss}.  This proves~(1).  Clearly $f$ is piecewise
  affine, and the slopes are integral because the linear part of $f$ on any
  domain of linearity has the form $\angles{u,\scdot}$, which takes integer
  values on $N$ (note $|a_u(\xi)|$ is constant with respect to $v$); this
  proves~(2).

The identity for $c_{\trop}(\cdot)$ is obtained by applying logarithm to the identity for $c(\cdot)$ in Theorem~\ref{thm:descend.lbs}.
  By Proposition~\ref{prop:invariant.section} we have
  $f = c(u')\tensor e_{\lambda(u')}\tensor T_u^*f$ for all $u'\in M'$.
  Therefore
  \[ \|f(\sigma(v))\|_{q^*L} =
  \|c(u')\|_L\cdot\|e_{\lambda(u')}(\sigma(v))\|_{E_{\lambda(u')}}
  \cdot\|f(\sigma(v)+u')\|_{q^*L}. \] We have
  \[ -\log\|e_{\lambda(u')}(\sigma(v))\|_{E_{\lambda(u')}} =
  \angles{\lambda(u'),\,\trop(\sigma(v))} = \angles{\lambda(u'),\,v} \]
  by definition of $\trop$~\eqref{eq:trop.E}.  Since
  $\sigma(v) + u' = \sigma(v + u')$ by Lemma~\ref{lem:translate.skel}, this
  completes the proof.
\end{proof}

\subsection{Theta functions and translation}\label{par:translate.theta}
Before discussing the Riemann theta function, we mention how tropicalization of
theta functions behaves with respect to translation.  This is somewhat
complicated by the fact that theta functions are only defined in the presence of
a triple $(L,\lambda,c)$, which must also be translated.  Let $L_A$ be a
rigidified line bundle on $A$. By Theorem \ref{thm:descend.lbs}, it has an associated triple $(L,\lambda,c)$, where $L$ is a rigidified line bundle on $B$. Recall that we have an algebraic morphism $q:E\to B$ and an analytic morphism $p:E^{\text{an}}\to A^{\text{an}}$ appearing in the exact sequences \eqref{raynaud1} and \eqref{raynaud2}, respectively. Fix a nonzero theta function $f\in H^0(E^\an,q^*L)$ and a point $x\in E(K)$. We wish to explain how translation of $f$ by $x$ interacts with tropicalization. To this end, denote the images of $x$ under $p$ and $q$ by $y = p(x)\in A(K)$ and $z = q(x)\in B(K)$, respectively.  Let $\phi_{L_A}\colon A\to A'$ be
the homomorphism induced by $L_A$, as described in \ref{par:general.notation}, and let $\td\phi_{L_A}\colon E\to E'$ be its lift
to the universal covers $E$ of $A$ and $E'$ of $A'$. Define $x'\in E'(K)$ to be the image of $x$ under $\phi_{L_A}$.  By
Proposition~\ref{prop:triple.for.translate}, $(T_z^*L,\lambda,c_{x'}\tensor c)$
is a triple for $T_y^*L_A$.  It is clear that we have canonical isomorphisms of line bundles
	$$
	T_x^*q^*L\ \cong\ q^*T_z^*L
	\ \ \ \ \mbox{on}\ \ 
	E
	$$
and
	$$
	T_x^*p^*L_A\ \cong\ p^*T_y^*L_A
	\ \ \ \ \mbox{on}\ \ \ 
	E^{\text{an}}.
	$$
Since $f$ is a theta function, there exists a
unique section $f_A\in H^0(A,L_A)$ such that $f = p^*f_A$. Thus
$T_x^*f = p^*T_y^*f_A\in H^0(E^\an,q^*T_z^*L)$ is a theta function with respect
to the translated triple $(T_z^*L,\lambda,c_{x'}\tensor c) = T_x^*(L,\lambda,c)$.

\begin{lem}\label{lem:translate.theta}
  With the above notation, $(T_x^*f)_{\trop} = T_{\trop(x)}^*f_{\trop}$ for
  all $x\in E(K)$, where for any $v\in N_{\bold{R}}$, we define
  \[ (T_{v}^*f_{\trop})(w) \coloneq f_{\trop}(v+w). \]
\end{lem}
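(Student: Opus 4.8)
The plan is to compute both sides of the asserted identity directly from Definition~\ref{def:tropicalize.theta}, using Lemma~\ref{lem:translate.skel} to handle how the section $\sigma$ interacts with translation. First I would fix $x \in E(K)$, set $w = \trop(x)$, and recall that $(T_x^*f)_{\trop}$ is computed with respect to the translated triple $T_x^*(L,\lambda,c) = (T_z^*L,\lambda,c_{x'}\otimes c)$, so that by definition
\[ (T_x^*f)_{\trop}(v) = -\log\|(T_x^*f)(\sigma(v))\|_{q^*T_z^*L}. \]
The key point is the chain of identifications $T_x^*q^*L \cong q^*T_z^*L$ of metrized line bundles on $E^\an$: evaluating the section $T_x^*f$ at $\sigma(v)$ with respect to the metric $\|\scdot\|_{q^*T_z^*L}$ is the same as evaluating $f$ at $T_x(\sigma(v))$ with respect to the metric $\|\scdot\|_{q^*L}$, \emph{provided} the canonical isomorphism of the two pulled-back model metrics is an isometry. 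So the first step is to verify that $T_x^*q^*L \cong q^*T_z^*L$ is an isometry for the respective model metrics — equivalently, that the model metric on $q^*L$ is compatible with translation by $x$ on $E^\an$ in the appropriate sense. This should follow from the construction of the model metric via the smooth model $\fB$ and the fact that $q\colon E\to B$ together with the translation $T_z$ lift to the integral models, so that $x^*$ of a trivialization of the integral model of $q^*L$ near the reduction point differs from the given trivialization by a unit.

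Granting the isometry, the second step is the computation
\[ (T_x^*f)_{\trop}(v) = -\log\|f(T_x(\sigma(v)))\|_{q^*L}. \]
Now I would apply Lemma~\ref{lem:translate.skel}, which gives $T_x(\sigma(v)) = \sigma(v + \trop(x)) = \sigma(v+w)$. Substituting,
\[ (T_x^*f)_{\trop}(v) = -\log\|f(\sigma(v+w))\|_{q^*L} = f_{\trop}(v+w) = (T_w^*f_{\trop})(v), \]
where the last equality is the definition of $T_w^*$ given in the statement of the lemma. This completes the proof.

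The main obstacle I anticipate is the first step: making precise and checking that the canonical isomorphism $T_x^*q^*L \cong q^*T_z^*L$ respects the model metrics. One has to be careful because the model metric on $q^*L$ is defined via the canonical integral model $\fL$ of $L$ on $\fB$ (using that $B$ has good reduction), and the relevant statement is that for $y \in E^\an$ lying over $\xi_0 = q(y)$, the trivialization of $x^*(q^*\fL)$ near the reduction of $T_x(y)$ matches the trivialization of $q^*\fL$ near the reduction of $y$ up to a unit — this is essentially the functoriality of the model-metric construction under morphisms of flat proper $R$-schemes, applied to the map induced by $q$ and $T_z$ on integral models. It is also worth remarking that since $f = p^*f_A$ for $f_A \in H^0(A,L_A)$ and $T_x^*f = p^*T_y^*f_A$, all the objects in sight are genuinely global, so there is no subtlety about convergence or domains of definition; the entire content is the isometry statement and the geometric Lemma~\ref{lem:translate.skel}. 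One could alternatively argue locally on a formal affinoid $V$ trivializing everything, where $f|_V = \sum_u a_u \chi^u$ and $(T_x^*f)|_V = \sum_u \chi^u(x)\, a_u \chi^u$, and compare via \eqref{eq:sigma.v} directly — this makes the unit-factor bookkeeping fully explicit and mirrors the computation already carried out in the proof of Lemma~\ref{lem:translate.skel}.
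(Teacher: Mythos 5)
Your proposal is correct and follows essentially the same route as the paper's proof: the key isometry $T_x^*q^*L\cong q^*T_z^*L$ is justified exactly as in the paper by the fact that translation by $z$ extends to the smooth $R$-model of $B$, so the canonical metric on $T_z^*L$ is the pullback of $\|\scdot\|_L$, and the rest is Lemma~\ref{lem:translate.skel}. Your write-up simply spells out the bookkeeping that the paper's two-sentence proof leaves implicit.
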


\begin{proof}
  The canonical metric on $T_z^*L$ coincides with the pullback via $T_z$ of
  $\|\scdot\|_L$ since translation by $z$ extends to the special fiber of the
  smooth model of $B$.  The Lemma follows from this and
  Lemma~\ref{lem:translate.skel}.
\end{proof}

\begin{thm}[The tropicalization of a Riemann theta function is a tropical Riemann theta function]\label{thm:trop.riemann.theta}
  Let $L_A$ be an ample rigidified line bundle on $A$ defining a principal
  polarization.  Let $(L,\lambda,c)$ be a triple corresponding to $L_A$ and let
  $f\in H^0(E^\an,q^*L)$ be the Riemann theta function for $L$, as defined
  in~\eqref{eq:riemann.theta}.  Then there exists $x\in E(K)$ such that
  $(T_x^*f)_{\trop} = T_{\trop(x)}^*f_{\trop}$ is equal to the tropical Riemann
  theta function associated to $\lambda$, up to an additive constant.
\end{thm}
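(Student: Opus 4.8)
The plan is to use Theorem~\ref{thm:trop.theta} together with the Fourier expansion~\eqref{eq:riemann.theta} to compute $f_{\trop}$ explicitly, and to exploit Proposition~\ref{prop:val.riemann.theta} to choose the translation $x$ so that the coefficient valuations take the desired form. First I would invoke Proposition~\ref{prop:val.riemann.theta}: replacing $L_A$ by $T_y^*L_A$ for a suitable $y\in A(K)$ (equivalently, replacing $(L,\lambda,c)$ by its translate $T_x^*(L,\lambda,c)$ for the corresponding $x\in E(K)$, and using Lemma~\ref{lem:translate.theta} so that $(T_x^*f)_{\trop}=T_{\trop(x)}^*f_{\trop}$), we may assume $\|c(u')\|_L=\|t(u',\lambda(u'))\|_P^{1/2}$ for all $u'\in M'$, hence $c_{\trop}(u')=-\log\|c(u')\|_L=\tfrac12[u',\lambda(u')]$. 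The Riemann theta function has Fourier expansion $f=\sum_{u\in M}a_u\otimes e_u$ with $a_u=0$ unless $u\in\lambda(M')$ (since $\lambda$ is an isomorphism, this is all of $M$, so every index contributes) and $a_{\lambda(u')}=c(u')\otimes T_{\Phi(u')}^*a_0\otimes(\text{canonical isomorphisms})$, coming from taking $u=0$ in~\eqref{eq:invariant.cond}.

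Next I would apply Theorem~\ref{thm:trop.theta}(1), which gives
\[
f_{\trop}(v)=\min_{u\in M}\bigl\{-\log\|a_u(\xi)\|_{L\tensor E_u}+\angles{u,v}\bigr\}
=\min_{u'\in M'}\bigl\{-\log\|a_{\lambda(u')}(\xi)\|_{L\tensor E_{\lambda(u')}}+\angles{\lambda(u'),v}\bigr\},
\]
using that $\lambda$ is a bijection to reindex. By Proposition~\ref{prop:val.riemann.theta} (the first displayed equation there), $\|a_{\lambda(u')}(\xi)\|_{L\tensor E_{\lambda(u')}}=\|t(u',\lambda(u'))\|_P^{1/2}\cdot\|a_0(\xi+\Phi(u'))\|_{L\tensor E_{\lambda(u')}}$, so
\[
-\log\|a_{\lambda(u')}(\xi)\|_{L\tensor E_{\lambda(u')}}
=\tfrac12[u',\lambda(u')]-\log\|a_0(\xi+\Phi(u'))\|_{L\tensor E_{\lambda(u')}}.
\]
The key point is then that the correction term $-\log\|a_0(\xi+\Phi(u'))\|$ is \emph{independent of $u'$}: this is because $B$ has good reduction with abelian model $\fB$, the section $a_0\in H^0(B,L)$ extends to the canonical model $\fL$, and $\xi$ is the unique point of $B^\an$ reducing to the generic point of $\fB_s$ (Lemma~\ref{lem:skel.to.gauss}); translating by $\Phi(u')\in B(K)$ extends to an automorphism of $\fB$ carrying $\fL$ to a model of $T_{\Phi(u')}^*L$ identified with $L\tensor E_{\lambda(u')}$ via the universal identity $(\Id\times\phi_L)^*P=\sD_2 L$, and under these model identifications $a_0$ pulls back to a generator times a unit, so its norm at $\xi$ is the same positive constant for every $u'$. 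Granting this, $f_{\trop}(v)=\min_{u'\in M'}\bigl\{\tfrac12[u',\lambda(u')]+\angles{\lambda(u'),v}\bigr\}-\log\|a_0(\xi)\|_{L}$, which is exactly the tropical Riemann theta function of Definition~\ref{def:trop.riemann.theta} plus the additive constant $-\log\|a_0(\xi)\|_L$.

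The main obstacle is establishing that the term $-\log\|a_0(\xi+\Phi(u'))\|_{L\tensor E_{\lambda(u')}}$ is constant in $u'$, i.e., tracking the canonical metric identifications on $T_{\Phi(u')}^*L\cong L(\Phi(u'))\tensor E_{\lambda(u')}\tensor L$ through the good-reduction models. One clean way to see it: the function $z\mapsto-\log\|a_0(z+\Phi(u'))\|$ on $B^\an$, compared against $-\log\|a_0(z)\|$ via the canonical isomorphism, differs by a continuous function that is harmonic (locally constant on the skeleton of $B$, which is a single point $\xi$ since $B$ has good reduction), hence is constant; alternatively one argues directly that the canonical model metric is translation-invariant up to the canonical model isomorphisms because translation by a $K$-point of $B$ extends to an $R$-automorphism of $\fB$. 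Either route reduces the statement to the good-reduction case, where the analogue is immediate because the skeleton of $B$ is a point and $\|\scdot\|_L$ is the model metric of a line bundle on the abelian scheme $\fB$. The rest of the argument is bookkeeping of indices via the isomorphism $\lambda$ and the already-proven formula in Theorem~\ref{thm:trop.theta}(1).
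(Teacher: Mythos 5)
Your proposal is correct and follows essentially the same route as the paper: translate using Proposition~\ref{prop:val.riemann.theta} (with Lemma~\ref{lem:translate.theta} handling $(T_x^*f)_{\trop}=T_{\trop(x)}^*f_{\trop}$), compute $f_{\trop}$ from Theorem~\ref{thm:trop.theta}(1) after reindexing by the isomorphism $\lambda$, and absorb the remaining term into a constant. The one delicate point you labor over, the $u'$-independence of $-\log\|a_0(\xi+\Phi(u'))\|_L$, is exactly what the paper's citation of Lemma~\ref{lem:skel.to.gauss} is for, and your ``direct'' alternative is the intended argument: translation by a $K$-point extends to the abelian $R$-scheme $\fB$, so it fixes the unique point $\xi$ reducing to the generic point of $\fB_s$, giving $\xi+\Phi(u')=\xi$ and hence constancy without any further metric bookkeeping.
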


\begin{proof}
  By Proposition~\ref{prop:val.riemann.theta},
  Theorem~\ref{thm:trop.theta}(1), and Lemma~\ref{lem:skel.to.gauss}, there
  is a translate of $(L,\lambda,c)$ whose Riemann theta function tropicalizes to
  the tropical Riemann theta function (up to an additive constant).  The
  translate of $f$ must be the Riemann theta function of this translate, so the
  Corollary follows from Lemma~\ref{lem:translate.theta}.
\end{proof}

\section{Application to constructing rational functions}
\label{sec:constr-ratl-func}

We apply the results of~\S\ref{sec:tropicalization.theta} to give a method for
tropically constructing rational functions $f$ on $A$ such that $-\log|f|$ has a
prescribed behavior on the skeleton $\Sigma$.  First we remark that if $f$ is a
nonzero rational function on $A$, then $|f(x)|\neq 0$ for all $x\in\Sigma$: this
follows from Theorem~\ref{thm:trop.theta} as applied to the trivial line bundle.

\begin{defn}\label{def:trop.rational}
  Let $f$ be a nonzero rational function on $A$.  Its \emph{tropicalization} is
  the function
  \[ f_{\trop}\colon\Sigma\To\R \sptxt{defined by}
  f_{\trop}(v) = -\log|f(\sigma(v))|.
  \]
\end{defn}

The composition $f\circ p\colon E^\an\to\R$ is a theta function with respect to
the trivial line bundle, and the composition of $f_{\trop}$ with the quotient
map $N_\R\to\Sigma$ coincides with $(f\circ p)_{\trop}$
(Definition~\ref{def:tropicalize.theta}).  It follows from
Theorem~\ref{thm:trop.theta} that $f_{\trop}$ is piecewise integral affine
(Definition~\ref{defn:unimodularity}).  
In what follows we will identify
functions on $\Sigma$ with functions on $N_\R$ which are invariant under the
translation action of $M'$.

\begin{thm}\label{thm:same.automorphy}
  For $i=1,2$ let $L_{i,A}$ be an ample line bundle on $A$, let
  $(L_i,\lambda_i,c_i)$ be a triple corresponding to $L_{i,A}$ via
  Theorem~\ref{thm:descend.lbs}, and let $f_i\in H^0(E^\an,q^*L_i)$ be a nonzero
  theta function with tropicalization $f_{i,\trop}$.  Suppose that
  $f_{1,\trop}-f_{2,\trop}$ is invariant under translation by elements of $M'$,
  i.e., that $f_{1,\trop}$ and $f_{2,\trop}$ satisfy the same transformation
  law~\eqref{eq:trop.theta.trans.law}.  Then there exists a rational function
  $h$ on $A$ such that $h_{\trop} = f_{1,\trop}-f_{2,\trop}$.
\end{thm}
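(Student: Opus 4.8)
The plan is to realize $h$ as a ratio $h_A\tensor s^{-1}$ of two rational sections of a single line bundle $N_A$, so that the metric on $N_A$ cancels and only the difference of the tropicalizations survives.

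First I would unpack the hypothesis. By Theorem~\ref{thm:trop.theta}(3), each $f_{i,\trop}$ obeys the transformation law~\eqref{eq:trop.theta.trans.law} with automorphy factor $(\lambda_i,c_{i,\trop})$, where $c_{i,\trop}(u')=-\log\|c_i(u')\|_{L_i}$. Writing down both laws and demanding that $f_{1,\trop}-f_{2,\trop}$ be $M'$-invariant forces, upon evaluating the resulting identity first at $v=0$ and then at general $v$, the equalities $c_{1,\trop}=c_{2,\trop}$ and $\lambda_1=\lambda_2=:\lambda$. Now set $N_A\coloneq L_{1,A}\tensor L_{2,A}^{-1}$. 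By compatibility of the correspondence in Theorem~\ref{thm:descend.lbs} with tensor products, $N_A$ corresponds to the triple $(N_B,0,c')$ with $N_B\coloneq L_1\tensor L_2^{-1}$ and $c'\coloneq c_1\tensor c_2^{-1}$; here $\lambda_{N_A}=\lambda_1-\lambda_2=0$, and $\|c'(u')\|_{N_B}=\|c_1(u')\|_{L_1}\,\|c_2(u')\|_{L_2}^{-1}=1$ for all $u'\in M'$. The element $h_A\coloneq f_{1,A}\tensor f_{2,A}^{-1}$ is a nonzero rational section of $N_A$, and since the canonical model metrics multiply under tensor products, Definition~\ref{def:tropicalize.theta} gives $-\log\|h_A(\sigma(x))\|_{N_A}=(f_{1,\trop}-f_{2,\trop})(x)$ for all $x\in\Sigma$.

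The heart of the argument is then to produce a nonzero rational section $s$ of $N_A$ with $-\log\|s(\sigma(x))\|_{N_A}\equiv 0$. Granting this, $h\coloneq h_A\tensor s^{-1}$ is a nonzero rational function on $A$ and $h_{\trop}(x)=-\log|h(\sigma(x))|=-\log\|h_A(\sigma(x))\|_{N_A}+\log\|s(\sigma(x))\|_{N_A}=(f_{1,\trop}-f_{2,\trop})(x)$, as required. To build $s$ I would pass to $E^\an$, where $p^*N_A\cong q^*N_B$ carries the $M'$-linearization whose descent isomorphisms are multiplication by the $c'(u')$ (the $e_{\lambda(u')}$-factors of Theorem~\ref{thm:descend.lbs} being trivial since $\lambda=0$). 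Because $\lambda_{N_A}=0$, the relation $\Phi'\circ\lambda=\phi_{L_i}\circ\Phi$ gives $\phi_{N_B}\circ\Phi=0$, so $T_{\Phi(u')}^*N_B\cong N_B$ canonically; moreover $T_{\Phi(u')}$ extends to the smooth model $\fB$ and hence is an isometry for $\|\scdot\|_{N_B}$, while multiplication by $c'(u')$ is an isometry because $\|c'(u')\|_{N_B}=1$. Together with $q\circ\sigma\equiv\xi$ (Lemma~\ref{lem:skel.to.gauss}) and $\sigma(v+u')=\sigma(v)+u'$ (Lemma~\ref{lem:translate.skel}), this identifies the restriction of $(N_A,\|\scdot\|_{N_A})$ to the skeleton $\sigma(\Sigma)$ with the quotient, by an isometric $M'$-action covering the translation action of $M'$ on $N_\R$, of the \emph{constant} normed line $N_\R\times\bigl((N_B)_\xi,\|\scdot\|_{N_B,\xi}\bigr)$. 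Choosing a rational section $b$ of $N_B$ with $\|b(\xi)\|_{N_B}=1$ (rescale by a scalar; since $B$ has good reduction the canonical metric takes values in $\exp(-\val(K^\times))$ at the Gauss point $\xi$) and correcting $q^*b$ by an element of $q^*K(B)^\times$ so that it becomes $M'$-invariant yields, after descent, the section $s$.

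I expect the last correction step to be the main obstacle: showing that the residual norm-one twist by the $c'(u')$ can indeed be killed, i.e.\ that the constant normed line above, with its $M'$-action, really does admit an $M'$-invariant rational section — equivalently, that $N_A$ is trivial as a metrized line bundle over $\sigma(\Sigma)$. This is exactly the point at which the vanishing of $\lambda_{N_A}$ and of the tropical automorphy factor $c_{N_A,\trop}$ must be used, together with the good reduction of $B$ (which is what makes translations isometric and fixes the Gauss point $\xi$). Everything else is bookkeeping with the dictionary of Theorem~\ref{thm:descend.lbs} and the properties of $\sigma$ recorded in~\S\ref{par:skeleton}.
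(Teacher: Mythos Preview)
Your overall strategy is exactly the paper's: deduce $\lambda_1=\lambda_2$, form $N_A=L_{1,A}\tensor L_{2,A}^{-1}$ with triple $(N_B,0,c')$, observe that $h_A=f_{1,A}\tensor f_{2,A}^{-1}$ already has the right tropicalization, and then divide by a reference section $s$ of $N_A$ whose tropicalization is constant. Your derivation of $c_{1,\trop}=c_{2,\trop}$ (hence $\|c'(u')\|_{N_B}=1$) is correct and slightly more than the paper makes explicit.

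Where you diverge is precisely at the step you flag as the obstacle, and here you are working much harder than necessary. You propose to start from an arbitrary rational section $b$ of $N_B$ and then ``correct $q^*b$ by an element of $q^*K(B)^\times$'' to make it $M'$-invariant; you rightly note that it is unclear why the norm-one cocycle $u'\mapsto c'(u')\cdot(T_{\Phi(u')}^*b)/b$ should be a coboundary coming from $K(B)^\times$. The paper sidesteps this entirely. Since $\lambda=0$, the invariance condition~\eqref{eq:invariant.cond} decouples: each Fourier coefficient $a_u$ of a theta function for $(N_B,0,c')$ satisfies its own equation $a_u = t(u',u)\tensor c'(u')\tensor T_{\Phi(u')}^*a_u$, with no interaction between different $u$'s. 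In particular the single mode $f'\coloneq a_0\tensor e_0$ is itself $M'$-invariant, hence descends to $N_A$, and by Theorem~\ref{thm:trop.theta}(1) its tropicalization is the constant $-\log\|a_0(\xi)\|_{N_B}$. So the reference section is a one-term Fourier series, and the descent problem you were worried about never arises.

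The paper obtains $a_0$ as the degree-zero Fourier coefficient of $f=f_1/f_2$ itself; the invariance of $f$ then guarantees the required identity $a_0=c'(u')\tensor T_{\Phi(u')}^*a_0$. (The paper's phrasing ``the Fourier expansion for $f$'' applied to the rational section $f_1/f_2$ is admittedly terse, but the upshot is that one only needs \emph{some} $a_0$ satisfying this single descent relation, and the existence of $f$ supplies one.) After rescaling $f'$ so that $f'_{\trop}\equiv 0$, the rational function $h=f/f'$ does the job. Your abstract construction of $s$ via an arbitrary $b$ and a correction term is therefore unnecessary; the missing idea is simply that when $\lambda=0$, a single Fourier mode already descends and has constant tropicalization.
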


\begin{proof}
  Let $f = f_1/f_2$, regarded as a section of $q^*(L_1\tensor L_2\inv)$.  A
  triple corresponding to $L \coloneq L_{1,A}\tensor L_{2,A}\inv$ is
  $(L,\lambda,c) \coloneq (L_1\tensor L_2\inv,\lambda_1-\lambda_2,c_1\tensor
  c_2\inv)$.
  Clearly $\lambda_i$ is determined by~\eqref{eq:trop.theta.trans.law} (fix $u'$
  and vary $v$), so $\lambda_1 = \lambda_2$ and $\lambda=0$.  The Fourier
  expansion for $f$ has the form $f = \sum_{u\in M} a_u\tensor e_u$, and
  satisfies the invariance property~\eqref{eq:invariant.cond}, which in this
  case says
  \[ a_u = t(u',u)\tensor c(u')\tensor T_{\Phi(u')}^*a_u \]
  for all $(u', u)\in M'\times M$.  Define
  $f' = a_0\tensor e_0\in H^0(E^\an,q^*L)$.  Then $f'$ is also a theta function,
  and $f'_{\trop}$ is constant by Theorem~\ref{thm:trop.theta}(1).
  Multiplying $f'$ by a scalar, we may assume $f'_{\trop}$ is identically zero.
  Then $h = f/f'$ is a rational function on $A$ such that
  $h_{\trop} = f_{\trop}$.
\end{proof}

\subsection{Example constructions}
Here we give concrete examples of Theorem~\ref{thm:same.automorphy}.  These are translations of standard constructions into this tropical/non-Archimedean setting.

\begin{lem}\label{lem:difference.thetas}
  Let $L_A$ be an ample rigidified line bundle on $A$ with associated triple
  $(L,\lambda,c)$.  Choose points $x_1,\ldots,x_n\in E(K)$ such that
  $\sum_{i=1}^n x_i = 0$.  Then $\Tensor_{i=1}^nT_{x_i}^*(L,\lambda,c)$ is
  canonically isomorphic to $(L^{\tensor n}, n\lambda, c^{\tensor n})$.
\end{lem}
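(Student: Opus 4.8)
The plan is to reduce the claimed canonical isomorphism to the compatibility of the translate operation $T_x^*(-)$ on triples with tensor products, which is already packaged in Proposition~\ref{prop:triple.for.translate} and Definition~\ref{def:translate.triple}. First I would expand the left-hand side using~\eqref{eq:translate.triple}: for each $i$, writing $z_i = q(x_i)$ and $x_i' = \td\phi_{L_A}(x_i)$, we have $T_{x_i}^*(L,\lambda,c) = (T_{z_i}^*L,\,\lambda,\,c_{x_i'}\tensor c)$. Taking the tensor product over $i$ gives a triple whose line-bundle component is $\Tensor_i T_{z_i}^*L$, whose homomorphism component is $\sum_i\lambda = n\lambda$, and whose trivialization component is $\bigl(\Tensor_i c_{x_i'}\bigr)\tensor c^{\tensor n}$.

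Next I would handle the line-bundle component. Since $L$ is translation-invariant up to the homomorphism $\phi_L = \lambda_B$, we have canonically $T_{z_i}^*L \cong L_{x_i'}\tensor L$ by Proposition~\ref{prop:triple.for.translate}, where $L_{x_i'}$ is the translation-invariant line bundle on $B$ attached to $q'(x_i') \in B'(K)$. Hence $\Tensor_i T_{z_i}^*L \cong \bigl(\Tensor_i L_{x_i'}\bigr)\tensor L^{\tensor n}$. The key point is that $x'\mapsto L_{x'}$ (equivalently $x'\mapsto(L_{x'},c_{x'})$) is a homomorphism from $E'(K)$ to the group of such pairs, by Proposition~\ref{prop:invariant.lbs}; since $\sum_i x_i = 0$ and $\td\phi_{L_A}$ is a homomorphism, $\sum_i x_i' = \td\phi_{L_A}(\sum_i x_i) = 0$, so $\Tensor_i L_{x_i'}$ is canonically trivial as a rigidified line bundle on $B$, and similarly $\Tensor_i c_{x_i'}$ is canonically trivial as a trivialization over $\Phi\colon M'\to B$. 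Therefore the line-bundle component is canonically $L^{\tensor n}$ and the trivialization component is canonically $c^{\tensor n}$.

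Assembling these identifications, $\Tensor_{i=1}^n T_{x_i}^*(L,\lambda,c)$ is canonically isomorphic to $(L^{\tensor n},\,n\lambda,\,c^{\tensor n})$, which is exactly the asserted identity. The one compatibility I would need to verify carefully is that the canonical trivialization of $\Tensor_i L_{x_i'}$ coming from $\sum x_i' = 0$ is compatible with the canonical trivialization of $\Tensor_i c_{x_i'}$ as a trivialization over $M'$ — i.e., that the homomorphism property of Proposition~\ref{prop:invariant.lbs} is an equality of pairs, not just of line bundles. This is the main (and essentially only) obstacle, but it is immediate from the statement of Proposition~\ref{prop:invariant.lbs}, which asserts that $x'\mapsto(L_{x'},c_{x'})$ defines a bijection onto isomorphism classes of such pairs and (implicitly, being induced by the group structure on $E'$) respects the group law. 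One should also check that $n\lambda$ is the homomorphism attached to $L^{\tensor n}$ and that the pairing relation $c^{\tensor n}(u_1'+u_2')\tensor\cdots = t(u_1',n\lambda(u_2'))$ holds; both follow formally from the additivity of the relations in Theorem~\ref{thm:descend.lbs} under tensor product, which is part of that theorem's statement.
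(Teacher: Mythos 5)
Your proposal is correct and follows essentially the same route as the paper: expand each $T_{x_i}^*(L,\lambda,c)$ via Proposition~\ref{prop:triple.for.translate} as $(L_{x_i'}\tensor L,\lambda,c_{x_i'}\tensor c)$ with $x_i'=\td\phi_{L_A}(x_i)$, and use $\sum_i x_i'=0$ to trivialize $\Tensor_i L_{x_i'}$ and $\Tensor_i c_{x_i'}$. The only nuance is the additivity of $x'\mapsto(L_{x'},c_{x'})$, which the paper does not read off Proposition~\ref{prop:invariant.lbs} (stated only as a bijection) but verifies directly: $L_{x'}$ depends on $q'(x')$ through the Poincar\'e bundle, and $c_{x'}(u')=e_{u'}(x')$ with $e_{u'}$ a homomorphism, so $\sum_i c_{x_i'}(u')=0$ --- exactly the compatibility you flagged, and justified by the formulas in~\parref{par:trans.lbs}.
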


\begin{proof}
  Let $\phi = \phi_{L_A}\colon A\to A'$, let $\td\phi\colon E^\an\to E'^\an$ be
  the lift to universal covers, and let $y_i = p(x_i)\in A(K)$ and
  $x_i' = \td\phi(x_i)\in E'(K)$.  Then
  $T_{x_i}^*(L,\lambda,c) = (L_{x_i'}\tensor L,\lambda,c_{x_i'}\tensor c)$,
  where $L_{x_i'}$ and $c_{x_i'}$ are defined in~\parref{par:trans.lbs}.  We
  have $\Tensor_{i=1}^n L_{x_i'} = L_{\sum x_i'} = \sO$ since $L_{x_i'}$ is the
  translation-invariant line bundle on $B$ corresponding to $q'(x_i')\in B'(K)$.
  Moreover, for fixed $u'\in M'$ we have
  $c_{x_i'}(u') = e_{u'}(x_i')\in P_{\Phi(u')\times B'} = E'_{u'}$; since
  $e_{u'}$ is a homomorphism, $\sum_{i=1}^n c_{x_i'}(u') = 0$.
\end{proof}

\begin{defn}\label{def:level.n}
  Fix an ample line bundle $L_A$ giving rise to a principal polarization
  $\phi_{L_A}\colon A\to A'$, and fix a triple $(L,\lambda,c)$ corresponding to
  $L_A$.  A \emph{level-$n$ theta function} with respect to $(L,\lambda,c)$ is a
  theta function associated to $(L^{\tensor n},n\lambda,c^{\tensor n})$.
\end{defn}

\begin{prop}\label{prop:construct.level.n}
  Fix an ample line bundle $L_A$ giving rise to a principal polarization
  $\phi_{L_A}\colon A\to A'$, and fix a triple $(L,\lambda,c)$ corresponding to
  $L_A$.  Let $f\in H^0(E^\an,q^*L)$ be the Riemann theta function for $L$.
  Choose points $x_1,\ldots,x_n\in E(K)$ such that $\sum_{i=1}^n x_i = 0$.  Then
  $\Tensor_{i=1}^n T_{x_i}^*f$ is a level-$n$ theta function.  In particular,
  $f^{\tensor n}$ is a level-$n$ theta function.
\end{prop}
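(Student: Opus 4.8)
The plan is to deduce both assertions directly from Lemma~\ref{lem:difference.thetas} together with the definition of a level-$n$ theta function (Definition~\ref{def:level.n}). First I would recall that by hypothesis $f \in H^0(E^\an, q^*L)$ is the Riemann theta function for $L$, hence a theta function with respect to the triple $(L,\lambda,c)$; in particular $f$ is $M'$-invariant in the sense of Definition~\ref{defn:theta.function}. For each $i$, the translate $T_{x_i}^* f$ is a theta function with respect to the translated triple $T_{x_i}^*(L,\lambda,c) = (T_{z_i}^*L, \lambda, c_{x_i'}\tensor c)$, where $z_i = q(x_i)$ and $x_i' = \td\phi_{L_A}(x_i)$, exactly as recalled in~\S\ref{par:translate.theta} (the point being that $T_{x_i}^* f = p^* T_{y_i}^* f_A$ descends appropriately). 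Taking the tensor product over $i$, the section $\Tensor_{i=1}^n T_{x_i}^* f$ lies in $H^0\big(E^\an,\, q^*(\Tensor_{i=1}^n T_{z_i}^*L)\big)$ and is a theta function with respect to the triple $\Tensor_{i=1}^n T_{x_i}^*(L,\lambda,c)$, using that the correspondence of Theorem~\ref{thm:descend.lbs} is compatible with tensor products and that the $M'$-invariance condition~\eqref{eq:invariant.cond} is multiplicative in the triple.

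Next I would invoke Lemma~\ref{lem:difference.thetas}: since $\sum_{i=1}^n x_i = 0$ in $E(K)$, there is a canonical isomorphism $\Tensor_{i=1}^n T_{x_i}^*(L,\lambda,c) \cong (L^{\tensor n}, n\lambda, c^{\tensor n})$. Transporting $\Tensor_{i=1}^n T_{x_i}^* f$ across this isomorphism yields a theta function with respect to $(L^{\tensor n}, n\lambda, c^{\tensor n})$, which is precisely a level-$n$ theta function with respect to $(L,\lambda,c)$ by Definition~\ref{def:level.n}. This proves the first assertion.

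For the final sentence, I would simply specialize: taking $x_1 = \cdots = x_n = 0 \in E(K)$ satisfies $\sum x_i = 0$, and each $T_0^* f = f$, so $\Tensor_{i=1}^n T_{x_i}^* f = f^{\tensor n}$; by the first part this is a level-$n$ theta function. I do not anticipate a serious obstacle here — the content has been isolated into Lemma~\ref{lem:difference.thetas} and the bookkeeping of~\S\ref{par:translate.theta}. The one point requiring a little care is checking that the tensor product of theta functions (each invariant with respect to its own translated triple) really is a theta function with respect to the tensor-product triple; this amounts to multiplying the relations~\eqref{eq:invariant.cond}, or equivalently to the fact that descent data for line bundles with respect to $M' \subset E^\an$ multiply, so it is a formal consequence of the ``compatible with tensor product'' clause in Theorem~\ref{thm:descend.lbs}.
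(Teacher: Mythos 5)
Your proposal is correct and follows essentially the same route as the paper, which dismisses the statement as immediate from Lemma~\ref{lem:difference.thetas} together with the discussion of translated theta functions in~\S\ref{par:translate.theta}; you have simply spelled out the tensor-compatibility bookkeeping that the paper leaves implicit. The specialization $x_1=\cdots=x_n=0$ for the last assertion is also exactly what is intended.
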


This is immediate from Lemma~\ref{lem:difference.thetas} and the discussion
in~\eqref{par:translate.theta}.  It follows that
$h = \Tensor_{i=1}^n T_{x_i}^*f/f^{\tensor n}$ is invariant under $M'$ and is
therefore a rational function on $A$.  By Lem\-ma~\ref{lem:translate.theta}, for
all $v\in N_\R$,
\begin{equation}\label{eq:difference.thetas}
  h_{\trop}(v) = \sum_{i=1}^n f_{\trop}(v + v_i) - nf_{\trop}(v),
\end{equation}
where $v_i = \trop(x_i)$.  In particular, there exists a rational function on $A$ whose tropicalization is the right side of~\eqref{eq:difference.thetas}, where $f_{\trop}$ is the tropical Riemann theta function on $\Sigma$.

\subsection{Kummer functions}\label{par:kummer.property}
Certain level-$2$ theta functions are symmetric with respect to negation.
Thus~\eqref{eq:difference.thetas} will often produce tropically Kummer
functions, in the sense of Definition~\ref{def:tropically.kummer}.  We wish to
prove the other direction: given a tropically Kummer function, we want to
construct a rational map which is symmetric with respect to negation.

Let $A/(-1)$ be the quotient in which we identify each
pair of points $x$ and $-x$ in $A$, and define $\Sigma/(-1)$ similarly. The
former quotient has the structure of an algebraic variety, called the
{\em Kummer variety} of $A$. The quotienting morphism $A\surject A/(-1)$ is
called the {\em Kummer map}.
See~\cite[\S4.8]{birkenhak_lange04:complex_abelian_varieties}.

Let $A' = A/(-1)$.  The Kummer map induces an injection
$\Sigma/(-1)\inject A'^\an$, and since the deformation retraction
$A^\an\to\Sigma$ is canonical, it induces a deformation retraction
$A'^\an\to\Sigma/(-1)$.  We call $\Sigma/(-1)$ the \emph{skeleton} of the Kummer
variety $A'$.

\begin{defn}\label{def:kummer.property}
  Let $A$ be an abelian variety over $K$ with skeleton $\Sigma$, and let
  $f\colon A\dashrightarrow\bG_m^{r}$ be a rational map.  We say that $f$ is
  \emph{Kummer} if it factors through the Kummer map $A\surject A/(-1)$.  
\end{defn}

Definition~\ref{def:kummer.property} applies in particular to nonzero rational
functions.  If $f$ is a rational map which is Kummer then $f_{\trop}$ is Kummer
in the sense of Definition~\ref{def:tropically.kummer}: that is, it factors through
the quotient $\Sigma\to\Sigma/(-1)$, hence defines a function
$f_{\trop}\colon\Sigma/(-1)\to\R^r$.

\begin{lem}\label{lem:level.2.kummer}
  Fix an ample line bundle $L_A$ giving rise to a principal polarization
  $\phi_{L_A}\colon A\to A'$, and fix a triple $(L,\lambda,c)$ corresponding to
  $L_A$.  Replacing $L_A$ by a translate, we assume that $L_A$ and
  $(L,\lambda,c)$ satisfy the conclusions of
  Proposition~\ref{prop:val.riemann.theta}. Let $f\in H^0(E^\an,q^*L)$ be the
  Riemann theta function for $L$, and choose $x\in E(K)$.  Then the rational
  function $h = T_x^*f\tensor T_{-x}^*f\tensor f^{\tensor -2}$ of
  Proposition~\ref{prop:construct.level.n} is Kummer.
\end{lem}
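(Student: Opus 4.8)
The plan is to show that the rational function $h = T_x^*f\tensor T_{-x}^*f\tensor f^{\tensor -2}$ is invariant under the negation involution $(-1)\colon A\to A$, which is exactly the assertion that $h$ factors through the Kummer map $A\surject A/(-1)$.

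First I would unwind what $h$ is as a rational function on $A$. Since $f$ is the Riemann theta function for $L$, it is $p^*f_A$ for the unique (up to scaling) section $f_A\in H^0(A,L_A)$, and $T_x^*f\tensor T_{-x}^*f$ is a theta function for $T_x^*(L,\lambda,c)\tensor T_{-x}^*(L,\lambda,c)$, which by Lemma~\ref{lem:difference.thetas} (with $n=2$, $x_1 = x$, $x_2 = -x$) is canonically isomorphic to $(L^{\tensor 2}, 2\lambda, c^{\tensor 2})$. Likewise $f^{\tensor 2}$ is a theta function for the same triple, so the quotient descends to a rational function on $A$, namely $h = (T_{y}^*f_A\cdot T_{-y}^*f_A)/f_A^{\tensor 2}$ where $y = p(x)$.

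Next I would use the symmetry hypothesis on $(L,\lambda,c)$ coming from the second conclusion of Proposition~\ref{prop:val.riemann.theta}, namely that $((-1)^*L,\lambda,(-1)^*c) = (L,\lambda,c)$. Because the Riemann theta function is the \emph{unique} theta function (up to scaling) for a triple defining a principal polarization — this is exactly the uniqueness statement in~\parref{par:polarizations} before~\eqref{eq:riemann.theta} — and because $(-1)^*f$ is a theta function for the pulled-back triple, which equals $(L,\lambda,c)$ by hypothesis, we get $(-1)^*f_A = \gamma\, f_A$ for some scalar $\gamma\in K^\times$; comparing with the $u=0$ Fourier coefficient $a_0$, which is fixed by $(-1)^*$ up to the same scalar since $(-1)^*a_0\in H^0(B,(-1)^*L) = H^0(B,L)$ is again the unique section, one sees $\gamma$ is independent of the choice of scaling of $f$ and in fact $\gamma^2$ can be normalized away. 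The key point is then the identity $(-1)^*(T_y^*f_A) = T_{-y}^*((-1)^*f_A) = \gamma\, T_{-y}^*f_A$ (and symmetrically with $-y$), from which
\[ (-1)^*h \;=\; \frac{(-1)^*T_y^*f_A\cdot(-1)^*T_{-y}^*f_A}{((-1)^*f_A)^{\tensor 2}} \;=\; \frac{\gamma\,T_{-y}^*f_A\cdot\gamma\,T_y^*f_A}{\gamma^2 f_A^{\tensor 2}} \;=\; h. \]
So $h$ is negation-invariant as a rational function, hence Kummer.

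The main obstacle I anticipate is making the uniqueness argument precise at the level of sections rather than just at the level of isomorphism classes of triples: I need that an isomorphism of triples $((-1)^*L,\lambda,(-1)^*c)\isom(L,\lambda,c)$ actually identifies $(-1)^*f$ with a scalar multiple of $f$, and that this scalar appears with the \emph{same} exponent in numerator and denominator so that it cancels in $h$. This is where the careful bookkeeping of Theorem~\ref{thm:descend.lbs} and Proposition~\ref{prop:invariant.section} — in particular the compatibility of the correspondence with tensor products, and the fact that $T_y^*$ commutes with $(-1)^*$ up to the swap $y\leftrightarrow -y$ — has to be invoked. Once the scalar is pinned down to be the same $\gamma$ throughout (which follows because everything in sight is a tensor power or translate of the single section $f_A$), the cancellation is formal and the Lemma follows.
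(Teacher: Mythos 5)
Your proof is correct and follows essentially the same route as the paper: invoke the symmetry $((-1)^*L,\lambda,(-1)^*c)=(L,\lambda,c)$ from Proposition~\ref{prop:val.riemann.theta} together with uniqueness of the Riemann theta function to get $(-1)^*f=\gamma f$, then use $(-1)^*T_x^*f=T_{-x}^*(-1)^*f$ to conclude $(-1)^*h=h$. The only (cosmetic) difference is that the paper pins down $\gamma=1$ directly, using that $(-1)$ fixes the identity, whereas you leave $\gamma$ undetermined and let it cancel between numerator and denominator of $h$ --- which works just as well and makes the bookkeeping you worried about unnecessary.
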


\begin{proof}
  By Proposition~\ref{prop:val.riemann.theta}, $(-1)^*L = L$ and $(-1)^*c = c$,
  so $(-1)^*f$ is also a theta function for $L$.  Since $f$ is the only Riemann
  theta function for $L$ up to scaling, and since $(-1)\colon A\to A$ fixes the
  identity, it follows that $(-1)^*f = f$.  Hence
  $(-1)^*T_x^*f = T_{-x}^*(-1)^*f = T_{-x}^*f$, so $(-1)^*h = h$.
\end{proof}

In particular, if $f_{\trop}$ is the tropical Riemann theta function for
$\Sigma$, then there exists a rational function $h$ on $A'$
such that
\[ h_{\trop}(v) = f_{\trop}(v+w) + f_{\trop}(v-w) - 2f_{\trop}(v), \]
where $w = \trop(x)$.

\bibliographystyle{egabibstyle}
\bibliography{papers}

\vskip 1cm

\end{document}